\documentclass[11pt]{article}
\usepackage{amsmath, amssymb, amsfonts, amsthm, graphicx, relsize, mathtools, mathrsfs, euscript, bbm, bbold, float, hyperref, enumitem, url, breakurl, stmaryrd, xcolor, tikz}
\usepackage[labelsep=period, font=footnotesize, labelfont=bf]{caption}
\usepackage[T1]{fontenc}
\usepackage[english]{babel}
\usepackage{titlesec}
\titlelabel{\thetitle.\,\,}

\hypersetup{
colorlinks,
linkcolor={black!50!black},
citecolor={black!50!black},
urlcolor={black!80!black}
}

\allowdisplaybreaks

\makeatletter
\newtheorem*{rep@theorem}{\rep@title}
\newcommand{\newreptheorem}[2]{%
\newenvironment{rep#1}[1]{%
\def\rep@title{#2 \ref{##1}}%
\begin{rep@theorem}}%
{\end{rep@theorem}}}
\makeatother

\theoremstyle{plain}
\newtheorem{theorem}{Theorem}[section]
\newreptheorem{theorem}{Theorem}
\newtheorem{lemma}[theorem]{Lemma}

\newtheorem{proposition}[theorem]{Proposition}

\theoremstyle{definition}
\newtheorem{definition}[theorem]{Definition}

\DeclareMathAlphabet{\mathbbmsl}{U}{bbm}{m}{sl}

\DeclareMathAlphabet{\mathpzc}{OT1}{pzc}{m}{it}

\newcommand{\mfrac}[2]{\mathchoice
{\vcenter{\hbox{\small$\displaystyle\frac{#1}{#2}$}}}
{\textstyle\frac{#1}{#2}}
{\scriptstyle\frac{#1}{#2}}
{\scriptscriptstyle\frac{#1}{#2}}}

\makeatletter
\DeclareRobustCommand\widecheck[1]{{\mathpalette\@widecheck{#1}}}
\def\@widecheck#1#2{%
\setbox\z@\hbox{\m@th$#1#2$}%
\setbox\tw@\hbox{\m@th$#1%
\widehat{%
\vrule\@width\z@\@height\ht\z@
\vrule\@height\z@\@width\wd\z@}$}%
\dp\tw@-\ht\z@
\@tempdima\ht\z@ \advance\@tempdima2\ht\tw@ \divide\@tempdima\thr@@
\setbox\tw@\hbox{%
\raise\@tempdima\hbox{\scalebox{1}[-1]{\lower\@tempdima\box
\tw@}}}%
{\ooalign{\box\tw@ \cr \box\z@}}}
\makeatother

\begin{document}

\title{\bf{\Large $\boldsymbol{2}$-Neighbor Bootstrap Percolation on Odd Graphs}\\[9mm]}

\author{
Ali  Mohammadian  \qquad Sina Rezaie Zareie  \qquad   Behruz  Tayfeh-Rezaie\\[4mm]
School of Mathematics,\\
Institute for Research in Fundamental Sciences (IPM),\\
P.O. Box 19395-5746, Tehran, Iran\\[4mm]
\href{mailto:ali\_m@ipm.ir}{ali\_m@ipm.ir}  \qquad
\href{mailto:rezaie@ipm.ir}{rezaie@ipm.ir} \qquad
\href{mailto:tayfeh-r@ipm.ir}{tayfeh-r@ipm.ir}\vspace{9mm}}

\date{}

\maketitle

\begin{abstract}
\noindent The $r$-neighbor bootstrap percolation process on a graph $G$ is a vertex-activation process that begins with a set of initially active vertices. In each subsequent round, every inactive vertex having at least $r$ active neighbors becomes active.
Denote  by $m(G,r)$  the minimum number of  initially active vertices  whose activation eventually spreads to all vertices of $G$.
In this article, among other results, we prove that
$(k^2+2k+3)/4 \leqslant m(\mathbbmsl{O}_k,2)\leqslant (k^2+5k+3)/3$,
where $\mathbbmsl{O}_k$ is the odd graph on a ground set of size  $2k+1$. This   confirms a conjecture posed  in 2021  by Grippo, Pastine, Torres, Valencia-Pabon, and Vera.\\[3mm]
\noindent {\bf Keywords:}  bipartite odd graph,    $r$-neighbor bootstrap percolation,  odd  graph,  percolating set.  \\[3mm]
\noindent {\bf AMS 2020 Mathematics Subject Classification:}    05C35, 60K35.   \\[9mm]
\end{abstract}

\section{Introduction}

Bootstrap percolation processes on graphs constitute a class of cellular automata, a concept introduced in 1966  by von Neumann  \cite{von}. One of the most extensively studied examples is the $r$-neighbor bootstrap percolation process  which was introduced in 1979 by Chalupa, Leath, and Reich \cite{cha}  in the context of ferromagnetism. This process is closely connected to several areas of graph theory,    including weak saturation and $P_3$-convexity. It has also appeared in the literature under various names  such as irreversible threshold process, influence propagation, and dynamic monopoly.

Throughout this article, all graphs are assumed to be  finite, undirected, and without multiple edges or loops. For a graph $G$, let $V(G)$ and $E(G)$ denote   vertex set and edge set of $G$, respectively.
Given a nonnegative integer $r$, the $r$-\textsl{neighbor bootstrap percolation process} on a graph $G$ begins with a set $V_0\subseteq V(G)$ of initially active vertices. For each integer $i\geqslant1$, the set of active vertices at round $i$ is defined by
\[
V_i=V_{i-1}\cup
\Big\{
v\in V(G)\,\Big|\,
\text{$v$ is adjacent to at least $r$ vertices in $V_{i-1}$}
\Big\}.
\]
The  set $V_0$ is called a \textsl{percolating set} if $V_t=V(G)$ for some integer $t\geqslant0$. We denote by $m(G,r)$ the minimum cardinality of a percolating set in the $r$-neighbor bootstrap percolation process on $G$.
Percolating sets are sometimes referred to as `contagious sets'.
The case $r=2$  is indeed the most heavily studied  in the  literature.
In the context of graph convexity, the parameter $m(G,2)$ is called  the   `$P_3$-hull number'  of $G$.

An edge analogue of the $r$-neighbor bootstrap percolation process is known as the    $r$-edge bootstrap percolation process. Given a nonnegative integer $r$, the $r$-\textsl{edge bootstrap percolation process} on a graph $G$ begins with a set $E_0\subseteq E(G)$ of initially active edges.  For each integer $i\geqslant1$, the set of active edges at round $i$ is defined by
\[
E_i=E_{i-1}\cup
\Big\{
e\in E(G)\,\Big|\,
\text{at least one endpoint of $e$ is incident to at least $r$ edges in $E_{i-1}$}
\Big\}.
\]
The  set $E_0$ is called a \textsl{percolating set} if $E_t=E(G)$ for some integer $t\geqslant0$. Let $m_e(G,r)$ denote the minimum cardinality of a percolating set in the $r$-edge bootstrap percolation process on $G$.
The case $r=2$ was studied by Lenormand and Zarcone \cite{len} in 1984 under a different name.

From a result of  \cite{ham}, we have
\begin{equation}\label{m-me}
\mfrac{m_e(G,r)}{r}
\leqslant
m(G,r)
\leqslant
m_e(G,r)+
\Big|\big\{v\in V(G)\,\big|\,\deg(v)<r\big\}\Big|,
\end{equation}
where
$\deg(v)=|\{x\in V(G) \, | \,    x \text{ is adjacent to } v\}|$.
Determining the extremal parameters $m(G,r)$ and $m_e(G,r)$ is one of the fundamental problems in bootstrap percolation theory. These parameters have been studied extensively for numerous graph classes, including grids, tori, and Hamming graphs,  see  for example \cite{tori}, \cite{hamming}, and \cite{pete}. They have also been investigated for  random graphs,  see  for example \cite{random}.

We next  recall some standard notation and terminology.
Let $k$ be a positive integer and set $M=\{1,\ldots,2k+1\}$.
The \textsl{odd graph} $\mathbbmsl{O}_k$ is the graph whose vertices are the $k$-subsets of $M$ in which  two vertices $S$ and $T$ are adjacent  if $S\cap T=\varnothing$. The term ``odd graph'' is due to Biggs and Gardiner \cite{odd-man-out}  who observe that each edge can be labeled by the unique element of $M$ not contained in either endpoint, representing the ``odd man out''. Odd graphs form a highly symmetric subclass of Kneser graphs and exhibit remarkable algebraic and geometric properties not shared by general members of the Kneser family.
The \textsl{bipartite odd graph} $\mathbbmsl{B}_k$ is the graph whose vertex set consists of all $k$-subsets and all $(k+1)$-subsets of $M$ in which two vertices $S$ and $T$ are adjacent  if $S\subsetneqq T$. In fact,  $\mathbbmsl{B}_k$ is the bipartite double cover of $\mathbbmsl{O}_k$.
Recall that the \textsl{bipartite double cover} of a graph $G$ with vertex set $\{v_1,\ldots,v_n\}$ is the bipartite graph with vertex bipartition  $\{v_1',\ldots,v_n'\}\cup\{v_1'',\ldots,v_n''\}$ in which $v_i'$ is adjacent to $v_j''$   if $v_i$ is adjacent to $v_j$ in $G$. Equivalently, the bipartite double cover of $G$ is isomorphic to the direct product $G\times K_2$, where $K_2$ is the complete graph on two vertices.
It is well known that    odd graphs and bipartite odd graphs are distance-regular  \cite{book}.

In this article, we   focus on the case where $r = 2$ and
study the parameters $m(\mathbbmsl{O}_k,2)$, $m(\mathbbmsl{B}_k,2)$, and $m_e(\mathbbmsl{B}_k,2)$. Specifically,   we employ the polynomial method introduced by Hambardzumyan, Hatami, and Qian \cite{ham} to determine the exact values of $m(\mathbbmsl{B}_k,2)$ and $m_e(\mathbbmsl{B}_k,2)$,   as presented in the following theorem.

\begin{theorem}\label{BOBO}
For every positive  integer $k$,
\[m_e(\mathbbmsl{B}_k,2)=k^2+2k+3 \quad \text{ and } \quad m(\mathbbmsl{B}_k, 2)=\left\lceil\mfrac{k^2+2k+3}{2}\right\rceil.\]
\end{theorem}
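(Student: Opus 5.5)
The plan is to prove each equality as a matching lower and upper bound. First I will establish $m_e(\mathbbmsl{B}_k,2)=k^2+2k+3$. The vertex statement then follows from \eqref{m-me} together with one extra construction. Since every vertex of $\mathbbmsl{B}_k$ has degree $k+1\geqslant 2$, \eqref{m-me} gives $\lceil m_e/2\rceil\leqslant m(\mathbbmsl{B}_k,2)\leqslant m_e$. So only the upper bound $m(\mathbbmsl{B}_k,2)\leqslant\lceil (k^2+2k+3)/2\rceil$ needs separate work.

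\textbf{Lower bound for $m_e$ (polynomial method of \cite{ham}).} I will assign to every edge $e$ a vector $f_e$ in a space $W$ so that two conditions hold.
\begin{enumerate}
\item[(i)] For every vertex $v$, the vectors $\{f_e : e\ni v\}$ lie in a subspace of dimension at most $2$, and any two of them span it.
\item[(ii)] The vectors $\{f_e : e\in E(\mathbbmsl{B}_k)\}$ span a space of dimension $k^2+2k+3$.
\end{enumerate}
Condition (i) makes the span of the active edge vectors invariant under the $2$-edge process. Once two edges at $v$ are active, every other edge at $v$ already lies in their span. A percolating set must therefore have at least $\dim W=k^2+2k+3$ elements.

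For the construction, write the edge $\{S,T\}$ with $T=S\cup\{a\}$. Introduce formal variables $x_1,\ldots,x_{2k+1}$. Let $f_{S,T}$ be a polynomial in which $a$ enters only affinely once $S$ is fixed, and in which $a$ enters only affinely once $T$ is fixed. A natural first attempt is
\[
f_{S,T}=\prod_{b\in S}(t-x_b)\cdot(\text{an affine expression in }x_a),
\]
corrected by terms symmetric in $T$. The choice should be guided by the count $k^2+2k+3=(k+1)^2+2$: the span should be something like bilinear expressions in $(1,x_a)$-type coordinates, plus two extra dimensions.

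I expect this to be the main obstacle. One must find vectors satisfying (i) at both ends of every edge, that is, compatible at both the $k$-set and the $(k+1)$-set. One must also compute the dimension of their span exactly, for instance by exhibiting $k^2+2k+3$ edges whose vectors are independent via a triangular leading-term argument.

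\textbf{Upper bounds.} For $m_e$, I will build an explicit active set of $k^2+2k+3$ edges. The intended shape is all edges at a fixed $(k+1)$-set, plus a structured family indexed by pairs of elements. I will then verify percolation by induction on $k$, using the copies of $\mathbbmsl{B}_{k-1}$ obtained by fixing an element in or out of the sets. For $m(\mathbbmsl{B}_k,2)$, I will pick roughly half of the endpoints of that edge set, arranged so that the activated vertices reproduce the edge configuration after one or two rounds. Alternatively, I will give a direct inductive vertex construction that gains $k+1$ vertices per step, which is consistent with $\lceil((k+1)^2+2)/2\rceil$. Small cases such as $k=1,2$ will be checked by hand to anchor the induction.
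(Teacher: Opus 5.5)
Your reductions are correct. Since every degree is $k+1\geqslant 2$, \eqref{m-me} gives $\lceil m_e/2\rceil\leqslant m(\mathbbmsl{B}_k,2)\leqslant m_e$, and your edge-vector criterion (i)--(ii) is a valid dual form of the polynomial method behind Theorem~\ref{hh}. But the proposal stops where the proof has to begin: the $k^2+2k+3$ independent vectors are never produced, as you acknowledge. The sketched ansatz also violates (i) before any correction. At a $(k+1)$-set $T$, the $k+1$ vectors $\prod_{b\in T\setminus\{c\}}(t-x_b)\cdot(\alpha+\beta x_c)$, $c\in T$, are linearly independent over $\mathbbmsl{R}$, since substituting $t=x_{c_0}$ isolates the $c_0$ term. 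So for $k\geqslant 2$ the ``correction'' is the entire problem.

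What (i) forces is that at each vertex $v$ the vectors have the form $u_v+\lambda_e w_v$ with pairwise distinct scalars $\lambda_e$. The natural way to make both endpoints of an edge agree is to take $\lambda_e$ to be the colour of $e$, i.e.\ to exhibit $k^2+2k+3$ independent elements of $W_C(\mathbbmsl{B}_k,2)$. The missing idea is an explicit such family. In the $\mathcal{F}$-model the paper takes:
\begin{itemize}
\item the constant $1$ and the polynomial $x$;
\item for $b\in Y$, $P^b_S(x)=x-b$ if $b\in S$ and $0$ otherwise;
\item for $a\in X$, $b\in Y$, the polynomial $Q^{a,b}_S$, which is $0$ unless $a,b\in S$, and is then $\Gamma^{a,b}_S\,(x-a)$ or $\Gamma^{a,b}_S\,(x-b)$ according as $|S_X|=|S_Y|$ or $|S_Y|=|S_X|+1$, where $\Gamma^{a,b}_S=\prod_{u\in S_X\setminus\{a\}}\frac{u-b}{u-a}\prod_{v\in S_Y\setminus\{b\}}\frac{v-a}{v-b}$.
\end{itemize}
Across an edge of colour $c\notin\{a,b\}$ the weight changes by exactly $\frac{c-b}{c-a}$ or its inverse, which compensates for the switch between $x-a$ and $x-b$. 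This gives $2+(k+1)+k(k+1)=k^2+2k+3$ functions. Their independence follows by evaluating successively on $\varnothing\sim\{b_0\}$, $\varnothing\sim\{b'_0\}$, $\{b_0\}\sim\{a_0,b_0\}$ and $\{a_0,b_0\}\sim\{a_0,b_0,b'_0\}$.

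The upper bounds are likewise only announced, and your one concrete commitment for $m_e$ cannot work for $k\geqslant 2$. Suppose a percolating edge set contains all $k+1$ edges at some vertex $v$. Deleting $k-1$ of them leaves a set that still percolates, because two active edges at $v$ reactivate the rest in one round. By the lower bound, such a set therefore has at least $k^2+3k+2>k^2+2k+3$ edges; an optimal set never has more than two initial edges at any vertex. Separately, fixing a single element in or out of the sets yields levels $k-1,k$ or $k,k+1$ of the Boolean lattice on $2k$ points, not $\mathbbmsl{B}_{k-1}$. You would have to fix one element in and another out.

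No induction is needed. Layer the vertices by distance from $\varnothing$ in the $\mathcal{F}$-model. Every vertex at distance $i\geqslant 3$ has at least two neighbours at distance $i-1$, so it suffices to make all edges at layers $0,1,2$ active. Two edges at $\varnothing$, one outward edge at each of the $k+1$ vertices $\{b\}$, and one outward edge at each of the $k(k+1)$ vertices $\{a,b\}$ do exactly that, for a total of $k^2+2k+3$.

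For vertices, the same layering reduces everything to activating the size-two vertices (Lemma~\ref{pairactive}). You still need an explicit set of size $\lceil(k^2+2k+3)/2\rceil$ that does so. ``Half the endpoints of the edge set'' is not a construction, since \eqref{m-me} only yields $m\leqslant m_e$ here. The paper's parity-dependent families get the factor two from $3$-sets $\{i,j,j'\}$ with $i\in X$ and $j,j'\in Y$ paired up. Each such set, together with the active singletons $\{j\}$ and $\{j'\}$, activates both $\{i,j\}$ and $\{i,j'\}$.
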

\noindent  Moreover, Grippo, Pastine, Torres, Valencia-Pabon, and Vera \cite{hull} conjectured that $m(\mathbbmsl{O}_k,2)=\mathnormal{\Theta}(k^2)$. We resolve this conjecture affirmatively by proving the following theorem.

\begin{theorem}\label{OO}
For every   positive   integer $k$,
\[\mfrac{k^2+2k+3}{4}\leqslant m(\mathbbmsl{O}_k,2)\leqslant\mfrac{k^2+5k+3}{3}.\]
\end{theorem}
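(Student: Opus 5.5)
The lower bound will come from Theorem~\ref{BOBO} by lifting a percolating set of $\mathbbmsl{O}_k$ to $\mathbbmsl{B}_k$; the upper bound needs an explicit construction.

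\textbf{Lower bound.} Let $A$ be a percolating set of $\mathbbmsl{O}_k$ for $r=2$. In $\mathbbmsl{B}_k=\mathbbmsl{O}_k\times K_2$, take both copies $A'\cup A''$, a set of size $2|A|$. By induction on rounds, if $v\in V_i$ in $\mathbbmsl{O}_k$, then both $v'$ and $v''$ are active by round $i$ in $\mathbbmsl{B}_k$. Indeed, if $v$ has active neighbours $u,w$ in $\mathbbmsl{O}_k$, then $v'$ is adjacent to the active vertices $u'',w''$, and symmetrically for $v''$. Hence $A'\cup A''$ percolates in $\mathbbmsl{B}_k$, and Theorem~\ref{BOBO} gives
\[
2|A|\geqslant \left\lceil \mfrac{k^2+2k+3}{2}\right\rceil\geqslant \mfrac{k^2+2k+3}{2}.
\]
This yields $m(\mathbbmsl{O}_k,2)\geqslant (k^2+2k+3)/4$.

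\textbf{Upper bound.} I would construct a percolating set of size at most $(k^2+5k+3)/3$ by induction on $k$. It suffices to have $f(k)-f(k-1)\approx (2k+4)/3$ with a small base case, since $(k^2+5k+3)/3-((k-1)^2+5(k-1)+3)/3=(2k+4)/3$. The induction would proceed in three steps.
\begin{enumerate}
\item Fix the element $2k+1$. The $k$-subsets of $M$ avoiding $2k+1$ and containing a fixed element $x$ form, after deleting $x$, the $(k-1)$-subsets of a $(2k-1)$-set. Two such sets containing $x$ are never disjoint, so this copy of $\mathbbmsl{O}_{k-1}$ does not sit inside $\mathbbmsl{O}_k$ directly. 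I therefore expect the recursion to be indirect: vertices $S\cup\{x\}$ and $T\cup\{y\}$ with $S\cap T=\varnothing$ are adjacent whenever $x\neq y$.
\item A cleaner option is to use the structure of $\mathbbmsl{O}_k$ as the union, over $i\in M$, of the neighbourhoods of the stars, together with the fact that the vertices not containing $i$ form a Kneser graph $K(2k,k)$. That graph is a perfect matching, and each vertex $S$ with $i\notin S$ is adjacent exactly to $M\setminus(S\cup\{j\})$ for $j\in M\setminus S$.
\item The plan is to activate vertices in layers indexed by their intersection with a fixed $k$-set $S_0$. Seed roughly $2k/3$ vertices per layer, chosen so that each new vertex already has two active neighbours from earlier layers. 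Consider a vertex $T$ with $|T\cap S_0|=j$. Its neighbours $M\setminus(T\cup\{z\})$ have intersection with $S_0$ of size $k-j$ or $k-j-1$. This gives an alternating recursion between layers $j$ and $k-j$, and I would analyse it in the spirit of the edge-process bound \eqref{m-me}.
\end{enumerate}

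\textbf{Main obstacle.} The main obstacle is the upper bound construction, that is, achieving the exact constant $1/3$. I would first verify small cases ($k=1,2,3$, where $\mathbbmsl{O}_2$ is the Petersen graph with $m=3$) by hand to guess the pattern. Then I would set up the inductive step: show that if a set $P_{k-1}$ percolates a suitable copy of $\mathbbmsl{O}_{k-1}$-like structure inside $\mathbbmsl{O}_k$ (for instance the vertices avoiding two fixed elements $a,b$, augmented by one of them), then adding about $(2k+4)/3$ carefully chosen vertices, grouped in triples that each trigger two new elements, completes the percolation.
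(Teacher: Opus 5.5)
Your lower bound is correct and is the paper's argument (Proposition~\ref{cover} plus Theorem~\ref{BOBO}). The upper bound, however, is not proved. You list candidate strategies but never exhibit a vertex set or check that it percolates, so the half of the theorem that needs real work is missing. Neither sketched route works as stated. The induction has no inductive step: as you observe, $k$-sets sharing an element are never adjacent. Likewise, the vertices through a fixed element, or avoiding two fixed elements, are pairwise non-adjacent, so there is no copy of $\mathbbmsl{O}_{k-1}$ there to recurse on. The layer scheme, taken at face value, seeds about $2k/3$ vertices in each of the $k+1$ layers $|T\cap S_0|=j$. That is roughly $2k^2/3$ vertices, twice the target $k^2/3$.

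The paper gets the constant $1/3$ from three ingredients you do not have.
\begin{enumerate}
\item Lemma~\ref{pairactive}: in the model of $\mathbbmsl{O}_k$ on $\mathcal{F}'$, it suffices to activate all size-two vertices $\{a,b\}$ with $a\in X$, $b\in Y$. Every vertex of size at least three has two neighbours one size lower, each singleton has $k\geqslant 2$ size-two neighbours, and $\varnothing$ then follows.
\item Lemma~\ref{halfpercolating}: a percolating set of $m$ size-two vertices can be halved. Keep the $k+1$ singletons $\{b\}$. For each $a\in X$, pair up the $b$'s with $\{a,b\}$ in the set and take the triples $\{a,b,b'\}$. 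Each $\{a,b\}$ is then activated by $\{b\}$ and $\{a,b,b'\}$, so at most $(m+3k+2)/2$ vertices are needed. This is exactly the factor $2$ your layer count is missing.
\item Theorem~\ref{OGOG}: an explicit set of at most $(2k^2+k)/3$ size-two vertices whose closure contains all of $X\times Y$. With $s=\lfloor (k-1)/3\rfloor$, split $X=X_1\cup X_2\cup X_3$ and $Y=Y_1\cup Y_2$ with $|X_1|=s$ and $|X_2|=|Y_1|=s+1$. Seed $(X_1\times Y)\cup(X_2\times Y_1)\cup(X_3\times Y_2)$. Then check by a chain of exact neighbour counts between the classes $\langle x_1,x_2,x_3,y_1,y_2\rangle$ that $X_2\times Y_2$ and $X_3\times Y_1$ become active; the chain passes through size-$k$ vertices and uses the edges between disjoint $k$-sets.
\end{enumerate}
The third step needs $k\geqslant 8$, so that $t=\lfloor k/2\rfloor-1-s$ and $s-t$ are positive. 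The cases $3\leqslant k\leqslant 7$ are checked by computer and $k\in\{1,2\}$ directly. Substituting $m\leqslant(2k^2+k)/3$ into $(m+3k+2)/2$ gives exactly $(k^2+5k+3)/3$.
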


The article  is organized as follows. In Section \ref{pre}, we present the necessary preliminaries and introduce   equivalent definitions for  odd graphs and bipartite odd graphs. In Section \ref{BOG}, we determine the exact values of $m(\mathbbmsl{B}_k,2)$ and $m_e(\mathbbmsl{B}_k,2)$ for all $k$. Finally, in Section \ref{OG}, we establish upper and lower bounds on $m(\mathbbmsl{O}_k,2)$, thereby resolving a conjecture posed in \cite{hull}.

\section{Preliminaries}\label{pre}

In this section, we provide the necessary preliminaries and background for the rest  of the article. We begin by presenting equivalent definitions for odd graphs and bipartite odd graphs that are essential for our subsequent proofs. As  usual, for   two adjacent vertices $u$ and $v$, the notation  $uv$ or    \(u \sim v\)   represents  the edge joining them.

Recall that $k$ is a positive integer and let $M=\{1,\ldots,2k+1\}$. We fix   the  partition $\{X, Y\}$ of $M$ with   $X=\{1,\ldots,k\}$ and $Y=\{k+1,\ldots,2k+1\}$.  For each subset $S\subseteq M$, we write   $S_X=S\cap X$ and $S_Y=S\cap Y$  to simplify the notation. Set
\[\mathcal{F}=\Big\{S\subseteq M \,\Big|\, |S_X|=|S_Y| \ \text{or}\ |S_Y|=|S_X|+1\Big\}.\]
We show that $\mathbbmsl{B}_k$ is isomorphic to the graph with vertex set $\mathcal{F}$ in which two vertices $S$ and $T$ are adjacent if $|S\triangle T|=1$, where $\triangle$ denotes the symmetric difference. To establish this isomorphism, consider the map $\varphi:\mathcal{F}\to V(\mathbbmsl{B}_k)$ defined by
\[\varphi(S)=
\begin{cases}
(X\setminus S_X)\cup S_Y  &    \text{ if } |S_X|=|S_Y|,\\[2mm]
S_X\cup (Y\setminus S_Y)  &     \text{ if } |S_Y|=|S_X|+1.
\end{cases}\]
Thus, we obtain an equivalent description of $\mathbbmsl{B}_k$. Figure \ref{B2} illustrates this description for the graph $\mathbbmsl{B}_2$. It follows immediately that $\mathbbmsl{B}_k$ is an induced subgraph of the $(2k+1)$-dimensional hypercube.
For any two adjacent vertices $S,T\in V(\mathbbmsl{B}_k)$, we assign to the edge $S\sim T$ the unique element of $S\triangle T$. 
This clearly  yields a proper edge-coloring of  $\mathbbmsl{B}_k$.

\begin{figure}
\centering
\begin{tikzpicture}[
    scale=1.2,
    vertex/.style={circle, fill=black, minimum size=4mm, inner sep=0pt},
    edge/.style={thick, black},
    lbl/.style={font=\small}
]

    \node[vertex] (R_p) at (-5, 0) {};
    \node[lbl, anchor=east] at (-5.1, 0) {$\varnothing$};

    \node[vertex] (A1_p) at (-3,  1.5) {}; \node[lbl, anchor=east] at (-2.73,  1.8) {$\mathsmaller{\{3\}}$};
    \node[vertex] (A2_p) at (-3,  0.0) {}; \node[lbl, anchor=east] at (-2.73,  0.3) {$\mathsmaller{\{4\}}$};
    \node[vertex] (A3_p) at (-3, -1.5) {}; \node[lbl, anchor=east] at (-2.73, -1.8) {$\mathsmaller{\{5\}}$};

    \node[vertex] (B1_p) at (-1,  2.5) {}; \node[lbl, anchor=east] at (-0.63,  2.8) {$\mathsmaller{\{1, 3\}}$};
    \node[vertex] (B2_p) at (-1,  1.5) {}; \node[lbl, anchor=east] at (-0.63,  1.8) {$\mathsmaller{\{2, 3\}}$};
    \node[vertex] (B3_p) at (-1,  0.5) {}; \node[lbl, anchor=east] at (-0.63,  0.2) {$\mathsmaller{\{1, 4\}}$};
    \node[vertex] (B4_p) at (-1, -0.5) {}; \node[lbl, anchor=east] at (-0.63, -0.8) {$\mathsmaller{\{2, 4\}}$};
    \node[vertex] (B5_p) at (-1, -1.5) {}; \node[lbl, anchor=east] at (-0.63, -1.8) {$\mathsmaller{\{1, 5\}}$};
    \node[vertex] (B6_p) at (-1, -2.5) {}; \node[lbl, anchor=east] at (-0.63, -2.8) {$\mathsmaller{\{2, 5\}}$};

    \node[vertex] (B1_dp) at (1,  2.5) {}; \node[lbl, anchor=west] at (0.5,  2.8) {$\mathsmaller{\{1, 3, 4\}}$};
    \node[vertex] (B2_dp) at (1,  1.5) {}; \node[lbl, anchor=west] at (0.5,  1.8) {$\mathsmaller{\{2, 3, 4\}}$};
    \node[vertex] (B3_dp) at (1,  0.5) {}; \node[lbl, anchor=west] at (0.5,  0.8) {$\mathsmaller{\{1,  4, 5\}}$};
    \node[vertex] (B4_dp) at (1, -0.5) {}; \node[lbl, anchor=west] at (0.5, -0.2) {$\mathsmaller{\{2, 4, 5\}}$};
    \node[vertex] (B5_dp) at (1, -1.5) {}; \node[lbl, anchor=west] at (0.6, -1.8) {$\mathsmaller{\{1, 3, 5\}}$};
    \node[vertex] (B6_dp) at (1, -2.5) {}; \node[lbl, anchor=west] at (0.5, -2.8) {$\mathsmaller{\{2, 3, 5\}}$};

    \node[vertex] (A1_dp) at (3,  1.5) {}; \node[lbl, anchor=west] at (2.6,  1.8) {$\mathsmaller{\{1, 2, 3, 4\}}$};
    \node[vertex] (A2_dp) at (3,  0.0) {}; \node[lbl, anchor=west] at (2.6,  0.3) {$\mathsmaller{\{1, 2, 4, 5\}}$};
    \node[vertex] (A3_dp) at (3, -1.5) {}; \node[lbl, anchor=west] at (2.6, -1.8) {$\mathsmaller{\{1, 2, 3, 5\}}$};

    \node[vertex] (R_dp) at (5, 0) {};
    \node[lbl, anchor=west] at (5.1, 0) {$\mathsmaller{\{1, 2, 3, 4,  5\}}$};

    \draw[edge] (R_p) -- (A1_p);
    \draw[edge] (R_p) -- (A2_p);
    \draw[edge] (R_p) -- (A3_p);

    \draw[edge] (R_dp) -- (A1_dp);
    \draw[edge] (R_dp) -- (A2_dp);
    \draw[edge] (R_dp) -- (A3_dp);

    \draw[edge] (A1_p) -- (B1_p); \draw[edge] (A1_p) -- (B2_p);
    \draw[edge] (A2_p) -- (B3_p); \draw[edge] (A2_p) -- (B4_p);
    \draw[edge] (A3_p) -- (B5_p); \draw[edge] (A3_p) -- (B6_p);

   \draw[edge] (A1_dp) -- (B1_dp);
   \draw[edge] (A1_dp) -- (B2_dp);  
      \draw[edge] (A2_dp) -- (B3_dp);
   \draw[edge] (A2_dp) -- (B4_dp);   
      \draw[edge] (A3_dp) -- (B5_dp);
   \draw[edge] (A3_dp) -- (B6_dp);

    \draw[edge] (B1_p) -- (B1_dp); \draw[edge] (B6_p) -- (B4_dp);
    \draw[edge] (B2_p) -- (B2_dp); \draw[edge] (B6_p) -- (B6_dp);
    \draw[edge] (B3_p) -- (B3_dp);
    \draw[edge] (B4_p) -- (B4_dp);

    \draw[edge] (B1_p) -- (B5_dp); \draw[edge] (B3_dp) -- (B5_p);
    \draw[edge] (B2_p) -- (B6_dp); \draw[edge] (B5_dp) -- (B5_p);
    \draw[edge] (B3_p) -- (B1_dp);
    \draw[edge] (B4_p) -- (B2_dp);

\end{tikzpicture}
\caption{The graph $\mathbbmsl{B}_2$ which is known as the Desargues graph in the literature.}\label{B2}
\end{figure}

Let $\mathcal{F}'$ denote the subset of $\mathcal{F}$ consisting of all elements of size at most $k$. It is routine  to verify that the restriction of $\varphi$ to $\mathcal{F}'$ induces an isomorphism between $\mathbbmsl{O}_k$ and the graph with vertex set $\mathcal{F}'$ in which  two vertices $S$ and $T$ are adjacent if $|S\triangle T|\in\{1,2k\}$. This provides an equivalent description of  $\mathbbmsl{O}_k$. Figure \ref{O2} depicts  this description for the graph $\mathbbmsl{O}_2$. We note that,   for any two  elements  $S,T\in\mathcal{F}'$, the case $|S\triangle T|=2k$ occurs if and only if $|S|=|T|=k$ and $S\cap T=\varnothing$.
Henceforth, we always  adopt the  above  definitions for  $\mathbbmsl{O}_k$ and $\mathbbmsl{B}_k$.

\begin{figure}
\centering
\begin{tikzpicture}[
    scale=1.2,
    vertex/.style={circle, fill=black, minimum size=4mm, inner sep=0pt},
    edge/.style={thick, black},
    lbl/.style={font=\small}
]

    \node[vertex] (R_p) at (-5, 0) {};
    \node[lbl, anchor=east] at (-5.1, 0) {$\varnothing$};

    \node[vertex] (A1_p) at (-3,  1.5) {}; \node[lbl, anchor=east] at (-2.73,  1.8) {$\mathsmaller{\{3\}}$};
    \node[vertex] (A2_p) at (-3,  0.0) {}; \node[lbl, anchor=east] at (-2.73,  0.3) {$\mathsmaller{\{4\}}$};
    \node[vertex] (A3_p) at (-3, -1.5) {}; \node[lbl, anchor=east] at (-2.73, -1.8) {$\mathsmaller{\{5\}}$};

    \node[vertex] (B1_p) at (-1,  2.5) {}; \node[lbl, anchor=east] at (-0.63,  2.8) {$\mathsmaller{\{1, 3\}}$};
    \node[vertex] (B2_p) at (-1,  1.5) {}; \node[lbl, anchor=east] at (-0.63,  1.8) {$\mathsmaller{\{2, 3\}}$};
    \node[vertex] (B3_p) at (-1,  0.5) {}; \node[lbl, anchor=east] at (-1,  0.7) {$\mathsmaller{\{1, 4\}}$};
    \node[vertex] (B4_p) at (-1, -0.5) {}; \node[lbl, anchor=east] at (-1, -0.7) {$\mathsmaller{\{2, 4\}}$};
    \node[vertex] (B5_p) at (-1, -1.5) {}; \node[lbl, anchor=east] at (-0.63, -1.8) {$\mathsmaller{\{1, 5\}}$};
    \node[vertex] (B6_p) at (-1, -2.5) {}; \node[lbl, anchor=east] at (-0.63, -2.8) {$\mathsmaller{\{2, 5\}}$};

    \draw[edge] (R_p) -- (A1_p);
    \draw[edge] (R_p) -- (A2_p);
    \draw[edge] (R_p) -- (A3_p);

    \draw[edge] (A1_p) -- (B1_p); \draw[edge] (A1_p) -- (B2_p);
    \draw[edge] (A2_p) -- (B3_p); \draw[edge] (A2_p) -- (B4_p);
    \draw[edge] (A3_p) -- (B5_p); \draw[edge] (A3_p) -- (B6_p);

    \draw[edge] (B1_p) to[bend left=35] (B6_p);  
 \draw[edge] (B1_p) to[bend left=35] (B4_p);
 \draw[edge] (B2_p) -- (B3_p);    
 \draw[edge] (B2_p) to[bend left=35] (B5_p);  
\draw[edge] (B3_p) to[bend left=35] (B6_p);  
 \draw[edge] (B5_p) -- (B4_p);

\end{tikzpicture}
\caption{The graph $\mathbbmsl{O}_2$ which is known as the  Petersen  graph in the literature.}\label{O2}
\end{figure}

We now recall a polynomial method introduced by Hambardzumyan, Hatami, and Qian \cite{ham}. We shall use this method to obtain a lower bound on $m_e(\mathbbmsl{B}_k,2)$ in Section \ref{BOG}. It is worth noting that Miralaei, Mohammadian, and Tayfeh-Rezaie \cite{hamming} proved  that this method is a special case of a general linear-algebraic lemma due to Balogh, Bollob\'as, Morris, and Riordan \cite{linear}  which has been used to establish several results on weak saturation. We remark that the following formulation differs slightly from the original version in \cite{ham}. The present refined version appears in \cite{bid} and \cite{hamming}.

\begin{definition}[{\cite{ham}}]\label{defhh}
Let $r$ be a nonnegative   integer and   let   $G$  be a graph equipped  with a  proper edge-coloring   $C : E(G)\rightarrow\mathbbmsl{R}$. Let $W_C(G, r)$ be the vector space over $\mathbbmsl{R}$ consisting of all
functions $\varphi : E(G)\rightarrow\mathbbmsl{R}$ for   which there exist  polynomials $\{P_v(x)\}_{v\in V(G)}$ satisfying
\begin{itemize}[noitemsep,topsep=0pt]
\item[{\rm (i)}]  $\deg P_v(x)\leqslant r-1$ for every  vertex  $v\in V(G)$;
\item[{\rm (ii)}]  $P_u(C(uv))=P_v(C(uv))=\varphi(uv)$ for each edge $uv\in E(G)$.
\end{itemize}
It is said that    $\big\{P_v(x)\big\}_{v\in V(G)}$  recognizes  $\varphi$. Notice that we adopt the convention that the degree of the zero polynomial is   $-1$.
\end{definition}

The next theorem provides a linear-algebraic lower bound on $m_e(G,r)$ for any graph $G$ and any nonnegative integer $r$. This result plays a crucial role in our proofs.

\begin{theorem}[{\cite{ham}}]\label{hh}
Let  $r$ be a nonnegative   integer  and let $C : E(G) \rightarrow\mathbbmsl{R}$ be a proper edge-coloring of a graph $G$. Then,  $m_e(G, r)\geqslant\dim W_C(G, r)$.
\end{theorem}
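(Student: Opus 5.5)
The plan is to compare $W_C(G,r)$ with the space of real functions on a percolating set. Fix a percolating set $E_0\subseteq E(G)$ of size $m_e(G,r)$ for the $r$-edge bootstrap percolation process, and consider the restriction map
\[
\rho : W_C(G,r)\longrightarrow \mathbbmsl{R}^{E_0},\qquad \varphi\mapsto \varphi|_{E_0}.
\]
First I would check that $W_C(G,r)$ is a vector space and that $\rho$ is linear. If $\{P_v\}$ recognizes $\varphi$ and $\{Q_v\}$ recognizes $\psi$, then $\{aP_v+bQ_v\}$ recognizes $a\varphi+b\psi$, and the degree bound $r-1$ is preserved. Restriction is clearly linear. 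It therefore suffices to show that $\rho$ is injective, since then $\dim W_C(G,r)\leqslant |E_0| = m_e(G,r)$.

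For injectivity, take $\varphi\in W_C(G,r)$ with $\varphi|_{E_0}=0$, together with a recognizing family $\{P_v\}$. I will prove by induction on $i$ that $\varphi$ vanishes on $E_i$. The base case $i=0$ is the hypothesis. For the inductive step, let $e=uv\in E_i\setminus E_{i-1}$. By the definition of the process, one endpoint, say $u$, is incident to at least $r$ edges $uw_1,\ldots,uw_r$ of $E_{i-1}$.

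Since $C$ is a proper edge-coloring, the colors $C(uw_1),\ldots,C(uw_r)$ are pairwise distinct. By condition (ii) of Definition \ref{defhh}, $P_u(C(uw_j))=\varphi(uw_j)=0$ for each $j$, using the induction hypothesis. Thus $P_u$ has at least $r$ distinct roots while $\deg P_u\leqslant r-1$, so $P_u\equiv 0$. Applying (ii) once more gives $\varphi(uv)=P_u(C(uv))=0$.

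Since $E_0$ percolates, $E_t=E(G)$ for some $t$, and hence $\varphi\equiv 0$. This shows $\rho$ is injective and completes the argument.

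There is no serious obstacle. The only points needing care are the degenerate case $r=0$ and the use of properness of $C$. When $r=0$, every polynomial has degree $-1$, i.e.\ is zero, so $W_C(G,0)=0$ and the bound is trivial. Properness of $C$ is exactly what guarantees that the $r$ roots of $P_u$ are distinct, which is the step where the proof would otherwise fail.
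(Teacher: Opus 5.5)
Your proof is correct. The paper does not prove this statement itself; it quotes it from \cite{ham}. Your argument is the standard one from that source: restriction to a percolating set $E_0$ is injective on $W_C(G,r)$, because at each round a vertex with $r$ active incident edges forces its polynomial, of degree at most $r-1$, to vanish at $r$ distinct colors, so the polynomial is identically zero and $\varphi$ vanishes on the newly activated edges.
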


The next proposition is folklore, but we include a short proof for completeness.

\begin{proposition}\label{cover}
Let  $G$ be a graph and let $H$ be the bipartite double cover of $G$.  Then,  $m(H,  r) \leqslant  2m(G,  r)$  for every  positive integer $r$.
\end{proposition}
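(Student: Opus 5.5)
Take a percolating set $A\subseteq V(G)$ with $|A|=m(G,r)$, and put
\[
A'=\{v'\mid v\in A\}\cup\{v''\mid v\in A\}\subseteq V(H).
\]
Then $|A'|=2|A|=2m(G,r)$, so it suffices to show that $A'$ percolates in the $r$-neighbor process on $H$.

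Let $V_0=A,V_1,V_2,\ldots$ be the rounds of the process on $G$, and let $V_0',V_1',\ldots$ be the rounds of the process on $H$ started from $V_0'=A'$. We show by induction on $i$ that
\[
\{v'\mid v\in V_i\}\cup\{v''\mid v\in V_i\}\subseteq V_i'.
\]
The case $i=0$ holds by construction.

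For the induction step, take $v\in V_i\setminus V_{i-1}$. Then $v$ has at least $r$ neighbors $u_1,\ldots,u_r$ in $V_{i-1}$. By the definition of the bipartite double cover, $v'$ is adjacent to $u_1'',\ldots,u_r''$, and $v''$ is adjacent to $u_1',\ldots,u_r'$. These vertices are pairwise distinct, since the $u_j$ are distinct. By the induction hypothesis they all lie in $V_{i-1}'$. Hence $v'$ and $v''$ each have at least $r$ active neighbors at round $i-1$, and both belong to $V_i'$. Vertices of $V_{i-1}$ are handled by monotonicity, since $V_{i-1}'\subseteq V_i'$.

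Since $V_t=V(G)$ for some $t$, we get $V_t'=V(H)$. Thus $A'$ is a percolating set of $H$, and $m(H,r)\leqslant|A'|=2m(G,r)$.

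There is no real obstacle here. The only point needing care is that adjacency in $H$ runs between the two copies, which is exactly why both copies of $A$ are seeded: it keeps the induction symmetric between the two sides.
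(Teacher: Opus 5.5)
Your proof is correct and matches the paper's argument. Both seed the two copies of a minimum percolating set of $G$. Both then observe that each vertex $v$ activated at round $i$ in $G$ has its $r$ active neighbors mirrored on the opposite side of $H$, so $v'$ and $v''$ become active by round $i$. You simply write out the induction and the distinctness of the $r$ neighbors in more detail than the paper's short sketch.
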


\begin{proof}
Let $V(G)=\{v_1,\ldots,v_n\}$ and let $V(H)=\{v'_1,\ldots,v'_n\}\cup\{v''_1,\ldots,v''_n\}$, where $v'_i$ and $v''_i$ are the vertices of $H$ corresponding to $v_i\in V(G)$ for each $i$. For every    subset $V\subseteq V(G)$, we set $V'=\{v'\mid v\in V\}$ and $V''=\{v''\mid v\in V\}$.
Assume that $V_0$ is a percolating set in the $r$-neighbor bootstrap percolation process on $G$  and assume that  $V_t$ denote the set of vertices activated after $t$ rounds.   We claim that $V'_0\cup V''_0$ is a percolating set in the $r$-neighbor bootstrap percolation process on $H$.
Indeed, at each round $t$, the vertices in $V'_t$ activate all vertices in $V''_{t+1}$ and the vertices in $V''_t$ activate all vertices in $V'_{t+1}$. Since $V_0$ is a percolating set of $G$, this process eventually activates all vertices of $H$. The result follows immediately.
\end{proof}

\section{Bipartite odd graphs}\label{BOG}

In this section,  we determine the exact values of   $m(\mathbbmsl{B}_k,2)$  and $m_e(\mathbbmsl{B}_k,2)$.   We begin by establishing a lower bound on $m_e(\mathbbmsl{B}_k,2)$  using the polynomial method described  in Section \ref{pre}. A matching upper bound is then obtained by explicitly constructing  a percolating    set in   the    $r$-edge bootstrap percolation process on $\mathbbmsl{B}_k$. Combining these two bounds yields the exact value of $m_e(\mathbbmsl{B}_k,2)$.  Finally,   we use    \eqref{m-me}   to derive the exact  value of $m(\mathbbmsl{B}_k,2)$.

In the following lemma, \(C\) is  the proper edge-coloring of \(\mathbbmsl{B}_k\) defined in Section \ref{pre}.

\begin{lemma}\label{lowerboundmeBOk}
For every positive integer \(k\),
$\dim W_C(\mathbbmsl{B}_k,2)\geqslant k^2+2k+3$.
\end{lemma}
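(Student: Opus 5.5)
By Theorem~\ref{hh} with $r=2$, it is enough to exhibit $k^2+2k+3$ linearly independent functions $\varphi:E(\mathbbmsl{B}_k)\to\mathbbmsl{R}$, each recognized by a family of polynomials of degree at most $1$. We use the description of $\mathbbmsl{B}_k$ on $\mathcal{F}$, where every edge has the form $S\sim S\cup\{c\}$ and carries colour $c\in M$. The recognition condition for this edge is
\[
P_S(c)=P_{S\cup\{c\}}(c).
\]
Equivalently, as we pass along an edge of colour $c$, the linear polynomial at the vertex may change only by a multiple of $(x-c)$. This suggests building each family as a sum of ``local increments'' of the form $u\,(x-c)$.

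\emph{The linear family.} For $u\in\mathbbmsl{R}^{2k+1}$ and $a\in\mathbbmsl{R}$, put
\[
P_S(x)=a+\sum_{i\in S}u_i(x-i).
\]
Adding $c$ to $S$ adds $u_c(x-c)$, which vanishes at $x=c$, so (ii) of Definition~\ref{defhh} holds. This gives a space of recognized functions of dimension up to $2k+2$.

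\emph{The pair family.} To reach order $k^2$ we exploit the constraint $|S_Y|-|S_X|\in\{0,1\}$ defining $\mathcal{F}$. From a vertex with $|S_X|=|S_Y|$ one can only add a $Y$-element or delete an $X$-element, and from a vertex with $|S_Y|=|S_X|+1$ only the reverse moves are possible. So each vertex carries a well-defined ``type'', and this makes increments of the form $w_{ij}\,(x-c)$ possible for $i\in X$ and $j\in Y$ whenever the coefficient depends on the type and on the membership of $i$ and $j$ in $S$. A candidate is: for each pair $(i,j)\in X\times Y$, let $P_S$ be supported on vertices containing $j$ but not $i$ (or a similar membership pattern), with a sign or offset depending on the type. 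One then checks (ii) edge by edge according to whether $c\in\{i,j\}$ and which type the endpoints have. If this particular choice fails, we would instead solve for the general recognized family invariant under a suitable subgroup of $\mathrm{Sym}(X)\times\mathrm{Sym}(Y)$ fixing $i$ and $j$. This leaves only a handful of unknowns, determined by the equations coming from the few orbit types of edges.

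\emph{Independence and counting.} We will evaluate the functions on explicit test edges, for instance edges near $\varnothing$ and near $\{i,j\}$ with various colours. Ordering the $(i,j)$-functions lexicographically should give a triangular evaluation matrix, so these functions are independent. We then determine the linear relations between the pair family and the linear family; we expect exactly enough redundancy (roughly $k$ relations, e.g.\ the sums over $i$ of the pair functions for a fixed $j$ reducing to linear ones) to leave a total of
\[
k(k+1)+(2k+2)-(k-1)=k^2+2k+3 .
\]

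\emph{Main obstacle.} The hard step is finding the right degree-$\leqslant 1$ pair construction that actually satisfies (ii) on every edge type, and then counting its dependencies with the linear family exactly. If the count falls short, we would add the functions supported on the two extreme vertices, $\varnothing$ and the top vertex, to make up the difference.
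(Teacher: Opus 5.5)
\textbf{There is a genuine gap: the pair family is never constructed, and the candidates you propose for it cannot work.} Your linear family is correct. It is spanned by the paper's $\phi$ and the functions $\phi_i$, $i\in M$, where $\phi_i$ is recognized by $x-i$ on sets containing $i$ and by $0$ elsewhere. But the $k(k+1)$ pair functions carry the whole lemma.

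Write $a\in X$, $b\in Y$ for your $i,j$. Say $S$ has type zero if $|S_X|=|S_Y|$ and type one if $|S_Y|=|S_X|+1$.

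\emph{Your candidate is identically zero.} Take the support $\{S: b\in S,\ a\notin S\}$.
A type-one $S$ in this class has its neighbours $S\cup\{a\}$ and $S\setminus\{b\}$ outside the class. So $P_S(a)=P_S(b)=0$, hence $P_S=0$.
A type-zero $S$ in the class has all $k+1\geqslant 2$ of its neighbours inside the class. So $P_S=0$ as well.

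\emph{Type-and-membership coefficients give nothing new.} Suppose $P_S$ depends only on the type of $S$ and on whether $a,b\in S$. For $k\geqslant 2$, the edges with colours outside $\{a,b\}$ force the same polynomial on both types within each membership class. The edges of colours $a$ and $b$ then leave only $\alpha+\beta x$, plus $u(x-a)$ if $a\in S$, plus $v(x-b)$ if $b\in S$. That is nothing beyond the linear family.

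\emph{The symmetrization fallback is unavailable.} Colours enter through their numerical values, so permutations of $X$ and $Y$ do not preserve $W_C$ in general. Moreover, the coefficients that actually work are not constant on orbits.

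\emph{The missing idea is a coefficient depending on every element of $S$.} For $S\supseteq\{a,b\}$ the paper sets $Q_S=\Gamma_S(x-a)$ on type zero and $Q_S=\Gamma_S(x-b)$ on type one, with $Q_S=0$ otherwise. Here $\Gamma_S=\prod_{u\in S_X\setminus\{a\}}\frac{u-b}{u-a}\prod_{v\in S_Y\setminus\{b\}}\frac{v-a}{v-b}$.
Crossing an edge of colour $c\notin\{a,b\}$ multiplies $\Gamma$ by exactly the factor $\frac{c-b}{c-a}$ or $\frac{c-a}{c-b}$ that turns $c-a$ into $c-b$ or vice versa. The roots $a$ on type zero and $b$ on type one make the edges of colours $a$ and $b$ that leave the support consistent.

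\textbf{Your count is only a guess, and with these pair functions it is one short.} The actual relations are $\phi_a=\sum_{b\in Y}\psi_{a,b}$ for each $a\in X$. A partial-fraction expansion of $\prod_{u\in S_X\setminus\{a\}}(z-u)\big/\prod_{v\in S_Y}(z-v)$ gives $\sum_{b\in S_Y}\Gamma^{a,b}_S=1$, and also $\sum_{b\in S_Y}b\,\Gamma^{a,b}_S=a$ on type one.
So there are $k$ relations, not $k-1$, and they sum over $Y$ for fixed $a$ rather than over $X$. Your linear family together with the $\psi_{a,b}$ therefore spans exactly $k^2+2k+2$ dimensions.

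The missing function is $\psi$, recognized by $P_S(x)=x$ for all $S$, which your linear family omits. It takes the value $b$ on the edge $\varnothing\sim\{b\}$. By contrast, everything in your linear family and every $\psi_{a,b}$ is constant on these edges.

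Your other fallback yields nothing. If all $P_S$ other than $P_\varnothing$ and $P_M$ vanish, then $P_\varnothing$ and $P_M$ must each vanish at the $k+1$ colours in $Y$.

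The paper avoids relation counting altogether. It takes $\phi$, $\psi$, the $\phi_b$ for $b\in Y$ only, and the $\psi_{a,b}$. It then checks independence on the edges $\varnothing\sim\{b_0\}$, $\varnothing\sim\{b_0'\}$, $\{b_0\}\sim\{a_0,b_0\}$ and $\{a_0,b_0\}\sim\{a_0,b_0,b_0'\}$, which kill the coefficients group by group. Also, Theorem~\ref{hh} plays no role here: the lemma is a statement about $\dim W_C$ itself.
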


\begin{proof}
Fix a positive integer $k$. In order to prove the lemma, it suffices to exhibit  $k^2+2k+3$ linearly independent functions  in $W_C(\mathbbmsl{B}_k,2)$.
Consider  two polynomials
$I_S(x)=1$ and $J_S(x)=x$ for every  vertex  $S\in V(\mathbbmsl{B}_k)$. By  Definition \ref{defhh}, it is clear  that
$\{I_S(x)\}_{S\in V(\mathbbmsl{B}_k)}$ and $\{J_S(x)\}_{S\in V(\mathbbmsl{B}_k)}$ recognize   two functions    $\phi, \psi\in W_C(\mathbbmsl{B}_k,2)$,  respectively.
Now, for  any    element \(b\in Y\) and  any   vertex     $S\in  V(\mathbbmsl{B}_k)$, define  the polynomial
\[P_S^b(x)=
\begin{cases}
x-b &   \text{ if } b\in S,\\[2mm]
0 &   \text{ otherwise}.
\end{cases}\]
For any     element \(b\in Y\) and   any  edge $S\sim T$ of  $\mathbbmsl{B}_k$ with    color  $c$, we   claim   that     $P_S^b(c)=P_T^b(c)$.     If either $b\in S\cap T$ or $b\notin S\cup T$, then    $P_S^b(x)=P_T^b(x)$.    Otherwise,   $b=c$ and so  $P_S^b(c)=P_T^b(c)=0$, proving the claim.
From   Definition \ref{defhh},
$\{P^b_S(x)\}_{S\in V(\mathbbmsl{B}_k)}$  recognizes    a  function     $ {\phi}_b\in W_C(\mathbbmsl{B}_k,2)$ for each    element \(b\in Y\).
Finally, for any
two elements  \(a\in X, b\in Y\) and any   vertex     $S\in  V(\mathbbmsl{B}_k)$, define  the polynomial
\[Q_S^{a, b}(x)=
\begin{cases}
\mathnormal{\Gamma}_S^{a, b}\big(x-a\big)  & \text{ if }  |S_X|=|S_Y| \text{ and } a,b\in S,\\[2mm]
\mathnormal{\Gamma}_S^{a, b}\big(x-b\big)  & \text{ if }  |S_Y|=|S_X|+1 \text{ and } a,b\in S,\\[2mm]
0  & \text{ otherwise},
\end{cases}\]
where
\[\mathnormal{\Gamma}_S^{a, b}
=\prod_{u\in S_X\setminus\{a\}}
\mfrac{u-b}{u-a}
\prod_{v\in S_Y\setminus\{b\}}
\mfrac{v-a}{v-b}.\]
For  any  two     elements  \(a\in X, b\in Y\)   and   any  edge $S\sim T$ of  $\mathbbmsl{B}_k$ with  color  $c$, we   claim   that     $Q_S^{a, b}(c)=Q_T^{a, b}(c)$.  Without loss of generality, we may assume that  $c\in T\setminus S$.   If \(\{a,b\}\nsubseteq T\), then \(\{a,b\}\nsubseteq S\)   and  hance
$Q_S^{a, b}(x)=Q_T^{a, b}(x)=0$. So,  we consider the case   \(\{a,b\}\subseteq T\). First, suppose that \(\{a,b\}\subseteq S\). This forces  that $c\notin\{a, b\}$. If $c\in X$, then \(|S_Y|=|S_X|+1\), \(|T_X|=|T_Y|\),   and
\[\mathnormal{\Gamma}_T^{a, b}
=
\mathnormal{\Gamma}_S^{a, b} \, \, \mfrac{c-b}{c-a}\]
which implies  that
$Q_S^{a, b}(c)=Q_T^{a, b}(c)$.
If $c\in Y$, then \(|S_X|=|S_Y|\), \(|T_Y|=|T_X|+1\),   and
\[\mathnormal{\Gamma}_T^{a, b}
=
\mathnormal{\Gamma}_S^{a, b} \, \, \mfrac{c-a}{c-b}\]
which yields  that
$Q_S^{a, b}(c)=Q_T^{a, b}(c)$.
Next, suppose that \(\{a,b\}\nsubseteq S\). This means that   $Q_S^{a, b}(x)=0$ and   forces  that $c\in\{a, b\}$. If $a=c$, then it follows from  \(a\in X\)   that $|T_X|=|T_Y|$  and      $Q_T^{a, b}(c)=0$.
If $b=c$, then  one  deduces  from    \(b\in Y\)   that $|T_Y|=|T_X|+1$  and     $Q_T^{a, b}(c)=0$. This shows that  $Q_S^{a, b}(c)=Q_T^{a, b}(c)=0$, proving   the claim. In view of    Definition \ref{defhh},
$\{Q^{a, b}_S(x)\}_{S\in V(\mathbbmsl{B}_k)}$  recognizes    a  function     $ {\psi}_{a, b}\in W_C(\mathbbmsl{B}_k,2)$ for any  two    elements   \(a\in X\) and \(b\in Y\).

It remains  to establish  that   $\{\phi, \psi\}\cup \{{\phi}_b\}_{b\in Y}\cup\{{\psi}_{a, b}\}_{a\in X, b\in Y}$ is  linearly independent.
To this end,  suppose  that
\begin{equation}\label{2sigma}
\lambda\phi+\mu\psi+\sum_{b\in Y}\lambda_{b} {\phi}_b+\sum_{a\in X, b\in Y}\mu_{a, b} {\psi}_{a, b}=0
\end{equation}
for some scalars   $\lambda, \mu, \lambda_{b}, \mu_{a, b} \in\mathbbmsl{R}$.
We show that all of these scalars  vanish.
Fix an   element $a_0\in X$ and
two distinct elements $b_0, b'_0\in Y$. Since all functions in   $\{{\phi}_b\}_{b\in Y}\cup\{{\psi}_{a, b}\}_{a\in X, b\in Y}$ vanish on the edges $\varnothing \sim \{b_0\}$ and  $\varnothing \sim \{b'_0\}$, it follows from \eqref{2sigma} that
$\lambda+\mu b_0=0$ and
$\lambda+\mu b'_0=0$,
yielding $\lambda=\mu =0$.
Consequently, \eqref{2sigma} can be rewritten as
\begin{equation}\label{2sigma2}
\sum_{b\in Y}\lambda_{b} {\phi}_b+\sum_{a\in X, b\in Y}\mu_{a, b} {\psi}_{a, b}=0.
\end{equation}
Denote by $e$ and $e'$   the edges  $\{b_0\}  \sim \{a_0, b_0\}$  and  $\{a_0, b_0\}  \sim \{a_0, b_0, b'_0\}$, respectively. For every    element \(b\in Y\),
we have
\[{\phi}_b(e)=
\begin{cases}
a_0-b_0 &  \text{ if }  b=b_0,\\[2mm]
0 &   \text{ otherwise}.
\end{cases}\]
Since all functions in   $\{{\psi}_{a, b}\}_{a\in X, b\in Y}$ vanish on   $e$, we  deduce  from   \eqref{2sigma2} that $\lambda_{b_0}=0$, meaning that   $\lambda_{b}=0$ for all     elements \(b\in Y\). Consequently, we may rewrite \eqref{2sigma2}  as
\begin{equation}\label{2sigma22}
\sum_{a\in X, b\in Y}\mu_{a, b} {\psi}_{a, b}=0.
\end{equation}
For any    elements \(a\in X\) and  \(b\in Y\), we have
\[\psi_{a, b}(e')=
\begin{cases}
b'_0-a_0 & \text{ if }   a=a_0 \text{ and } b=b_0,\\[2mm]
0 &    \text{ otherwise}.
\end{cases}\]
This along with  \eqref{2sigma22} implies  that
$\mu_{a_0, b_0}=0$,  meaning that   $\mu_{a, b}=0$ for all     elements    \(a\in X\) and \(b\in Y\).
\end{proof}

\begin{lemma}\label{upperboundBOk}
For every positive integer $k$,
$m_e\left(\mathbbmsl{B}_k, 2\right)\leqslant k^2+2k+3$.
\end{lemma}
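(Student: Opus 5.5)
The plan is to write down an explicit set $E_0$ of exactly $k^2+2k+3$ edges and show that it percolates in the $2$-edge bootstrap process. I use the description of $\mathbbmsl{B}_k$ on $\mathcal{F}$ from Section \ref{pre}. Observe first that in the $2$-edge process, once a vertex $S$ has at least two active incident edges, every edge at $S$ becomes active at the next round. Say that such a vertex \emph{fires}. It then suffices to arrange that every vertex fires eventually.

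\textbf{The seed.} Fix $a_0\in X$ and, for each $b\in Y$, fix some $b^*\in Y\setminus\{b\}$ (possible since $|Y|=k+1\geqslant2$). Let $E_0$ consist of:
\begin{itemize}[noitemsep,topsep=0pt]
\item[(1)] two edges at $\varnothing$, say $\varnothing\sim\{b_1\}$ and $\varnothing\sim\{b_2\}$;
\item[(2)] for each $b\in Y$, the edge $\{b\}\sim\{a_0,b\}$;
\item[(3)] for each $a\in X$ and $b\in Y$, the edge $\{a,b\}\sim\{a,b,b^*\}$.
\end{itemize}
These edges lie in $\mathcal{F}$ and are pairwise distinct. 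Their number is $2+(k+1)+k(k+1)=k^2+2k+3$.

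\textbf{Low layers.} The vertices of $\mathcal{F}$ of size at most $2$ are $\varnothing$, the sets $\{b\}$ with $b\in Y$, and the sets $\{a,b\}$ with $a\in X$, $b\in Y$; the sets $\{a\}$ and $\{a,a'\}$ are not in $\mathcal{F}$. I check that they fire layer by layer.
\begin{itemize}[noitemsep,topsep=0pt]
\item By (1), $\varnothing$ fires, so every edge $\varnothing\sim\{b\}$ becomes active.
\item Each $\{b\}$ then has that edge together with its seed edge from (2), so it fires. This activates every edge $\{b\}\sim\{a,b\}$.
\item Each $\{a,b\}$ then has the edge to $\{b\}$ together with its seed edge from (3), so it fires.
\end{itemize}

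\textbf{Higher layers, by induction on $|S|$.} Let $S\in\mathcal{F}$ with $|S|\geqslant3$. I claim $S$ has at least two neighbours of size $|S|-1$ in $\mathcal{F}$.
\begin{itemize}[noitemsep,topsep=0pt]
\item If $|S_X|=|S_Y|=j$, then $|S|\geqslant3$ forces $j\geqslant2$. Removing any $a\in S_X$ gives a set with $|S_Y|=|S_X|+1$, which lies in $\mathcal{F}$. This gives $j\geqslant2$ such neighbours.
\item If $|S_Y|=|S_X|+1=j+1$, then $|S|\geqslant3$ forces $j\geqslant1$. Removing any $b\in S_Y$ gives a set with equal parts, which lies in $\mathcal{F}$. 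This gives $j+1\geqslant2$ such neighbours.
\end{itemize}
By the induction hypothesis these lower neighbours all fire, so at least two edges at $S$ become active and $S$ fires. This covers all vertices up to $M$, so every vertex fires and every edge becomes active. Hence $E_0$ percolates and $m_e(\mathbbmsl{B}_k,2)\leqslant k^2+2k+3$.

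The only delicate step is the low layers, sizes $1$ and $2$. There the vertices have just one lower neighbour in $\mathcal{F}$, and the seed edges in (2) and (3) are chosen exactly to make up the missing second edge. The count of these forced additions is what matches the bound $k^2+2k+3$ from Lemma \ref{lowerboundmeBOk}.
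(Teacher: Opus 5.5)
Your proposal is correct and follows essentially the same approach as the paper: seed two edges at $\varnothing$, one upward edge at each size-one vertex and one upward edge at each size-two vertex, for $2+(k+1)+k(k+1)=k^2+2k+3$ edges in total, then use that every vertex of size at least three has at least two lower neighbours. Your version makes the edge choices explicit and writes out the layer-by-layer firing and the induction that the paper only sketches.
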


\begin{proof}
For each integer \(i \geqslant 0\), let \(\mathcal{V}_i\) denote the set of vertices at distance \(i\) from the vertex  $\varnothing$. In view of the description of $\mathbbmsl{B}_k$ presented in Section \ref{pre}, it is
clear  that  every vertex in \(\mathcal{V}_i\) has at least two neighbors in \(\mathcal{V}_{i-1}\) whenever \(i \geqslant 3\).
Consequently, if all edges incident to  vertices in \(\mathcal{V}_0 \cup \mathcal{V}_1 \cup \mathcal{V}_2\) are activated, then every remaining edge will eventually becomes  activated as well. So, it is enough to present a  percolating  set of edges incident to   vertices in \(\mathcal{V}_0 \cup \mathcal{V}_1 \cup \mathcal{V}_2\) of size $k^2+2k+3$.
To achieve this, let $E_0$ be a set consisting of the following edges:
\begin{itemize}[noitemsep,topsep=0pt]
\item[(i)] two arbitrary edges incident to  $\varnothing$;
\item[(ii)] one arbitrary edge joining \(S\) to a vertex in \(\mathcal{V}_2\) for each vertex \(S \in \mathcal{V}_1\);
\item[(iii)] one arbitrary edge joining \(T\) to a vertex in \(\mathcal{V}_3\)  for each vertex \(T \in \mathcal{V}_2\).
\end{itemize}
It is straightforward to verify that   $|E_0|=k^2+2k+3$ and    that     the  initial activation of $E_0$ eventually causes the activation of all edges incident to  some of the  vertices in \(\mathcal{V}_0 \cup \mathcal{V}_1 \cup \mathcal{V}_2\).
\end{proof}

The following theorem is a direct consequence of  Theorem  \ref{hh},   Lemma  \ref{lowerboundmeBOk},  and Lemma  \ref{upperboundBOk}.

\begin{theorem}\label{meBOk}
For every positive integer \(k\),
$m_e\left(\mathbbmsl{B}_k,2\right)=k^2+2k+3$.
\end{theorem}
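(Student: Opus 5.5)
The statement is Theorem~\ref{meBOk}, so the plan is to put together the two bounds already proved. For the lower bound, apply Theorem~\ref{hh} with $r=2$ to the graph $\mathbbmsl{B}_k$ and its proper edge-coloring $C$ from Section~\ref{pre}. This coloring sends each edge $S\sim T$ to the unique element of $S\triangle T$, which lies in $M\subseteq\mathbbmsl{R}$, so it is a legitimate real-valued proper coloring. Theorem~\ref{hh} then gives
\[
m_e(\mathbbmsl{B}_k,2)\geqslant\dim W_C(\mathbbmsl{B}_k,2),
\]
and Lemma~\ref{lowerboundmeBOk} shows that the right-hand side is at least $k^2+2k+3$.

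For the matching upper bound, invoke Lemma~\ref{upperboundBOk}, which provides an explicit percolating edge set of size $k^2+2k+3$. Combining the two inequalities yields equality.

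There is essentially no obstacle at this stage, since all the real work sits in the two lemmas. The one point I would double-check is the cardinality count in Lemma~\ref{upperboundBOk}: $|\mathcal{V}_1|=k+1$ and $|\mathcal{V}_2|=k(k+1)$, so
\[
|E_0|=2+(k+1)+k(k+1)=k^2+2k+3.
\]
I would also confirm that activation spreads through $\mathcal{V}_0\cup\mathcal{V}_1\cup\mathcal{V}_2$. The root $\varnothing$ has two active edges, so all edges at $\varnothing$ become active. Each vertex of $\mathcal{V}_1$ then has an active edge to $\varnothing$ plus its chosen edge to $\mathcal{V}_2$, so it activates all its edges. Each vertex of $\mathcal{V}_2$ then has its edge to its unique neighbor in $\mathcal{V}_1$ plus its chosen edge to $\mathcal{V}_3$, so it activates all its edges too. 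From there the layer argument, which uses two down-neighbors for layers $i\geqslant3$, finishes the percolation.
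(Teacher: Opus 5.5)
Your proposal is correct and follows the paper's route exactly: the paper obtains Theorem~\ref{meBOk} directly from Theorem~\ref{hh} with Lemma~\ref{lowerboundmeBOk} for the lower bound and Lemma~\ref{upperboundBOk} for the upper bound. Your extra checks are accurate and fill in details the paper calls ``straightforward'': the count $|E_0|=2+(k+1)+k(k+1)$, and the layer-by-layer spreading, which uses that each $\{a,b\}\in\mathcal{V}_2$ has the unique down-neighbor $\{b\}$.
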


Combining Theorem \ref{meBOk} with the theorem below immediately yields Theorem \ref{BOBO}. To facilitate our proofs, we first establish an auxiliary lemma that will also be utilized in Section \ref{OG}.

\begin{lemma}\label{pairactive}
Let $k\geqslant2$ and let $G  \in \{\mathbbmsl{O}_k, \mathbbmsl{B}_k\}$. Assume that
the initial activation of a subset     $\mathcal{W}\subseteq V(G)$      eventually activates all  size-two   vertices      in  the   $2$-neighbor bootstrap percolation process on   $G$. Then,    $\mathcal{W}$  is   a percolating set.
\end{lemma}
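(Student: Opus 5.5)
The plan is to show, by induction on the size $|S|$, that every vertex $S$ of $G$ eventually becomes active. We work throughout with the equivalent descriptions of Section \ref{pre}: vertices of $G$ are elements of $\mathcal{F}$ (respectively $\mathcal{F}'$), and two of them are adjacent whenever their symmetric difference has size $1$. In $\mathbbmsl{O}_k$ there are additional adjacencies with $|S\triangle T|=2k$, but these only help, so we may ignore them.

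\textbf{Base cases (sizes $0$ and $1$).}
\begin{itemize}[noitemsep,topsep=0pt]
\item Every size-one vertex lies in $\mathcal{F}$ only if it is $\{b\}$ with $b\in Y$.
\item The vertex $\{b\}$ is adjacent to all size-two vertices $\{a,b\}$ with $a\in X$, and there are $k\geqslant 2$ of these. Since all size-two vertices become active by hypothesis, each $\{b\}$ becomes active.
\item The vertex $\varnothing$ is adjacent to all $\{b\}$ with $b\in Y$, and $|Y|=k+1\geqslant 2$, so $\varnothing$ becomes active as well.
\end{itemize}

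\textbf{Inductive step.} Suppose every vertex of size at most $s$ is active, where $s\geqslant 2$. Let $T$ be a vertex of size $s+1$; in the case $G=\mathbbmsl{O}_k$ we also require $s+1\leqslant k$. We need two distinct neighbours of $T$ of size $s$, that is, two elements $c\in T$ such that $T\setminus\{c\}\in\mathcal{F}$. There are two cases.
\begin{itemize}[noitemsep,topsep=0pt]
\item If $|T_X|=|T_Y|$, then removing any element of $T_X$ gives a set with $|S_Y|=|S_X|+1$, which lies in $\mathcal{F}$. Since $|T|=2|T_X|=s+1\geqslant 3$, we get $|T_X|\geqslant 2$, so there are at least two such removals.
\item If $|T_Y|=|T_X|+1$, then removing any element of $T_Y$ gives a set with $|S_X|=|S_Y|$, which lies in $\mathcal{F}$. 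Here $|T_Y|\geqslant 2$ because $|T|\geqslant 3$.
\end{itemize}
In either case $T$ has at least two active neighbours, so $T$ becomes active. Because the process is monotone, the induction covers every vertex of $\mathcal{F}$ (respectively $\mathcal{F}'$), and hence $\mathcal{W}$ is a percolating set.

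The only delicate points are the small-size cases, and these are where $k\geqslant 2$ is used. In the base case it guarantees that each $\{b\}$ has at least two size-two neighbours. In the inductive step the bound $s\geqslant 2$ is what makes $|T_X|\geqslant 2$ or $|T_Y|\geqslant 2$ hold. The case $|T|=2$ needs no argument, since it is exactly the hypothesis.

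For $\mathbbmsl{O}_k$ the vertex set $\mathcal{F}'$ stops at size $k$. So we only need to check that the neighbours used in the induction lie in $\mathcal{F}'$, which holds because they are smaller than $T$. Since the argument uses only the symmetric-difference-one edges, which are present in both graphs, the same proof applies verbatim to $\mathbbmsl{O}_k$ and $\mathbbmsl{B}_k$.
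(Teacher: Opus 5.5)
Your proof is correct and takes essentially the same route as the paper. Both arguments work layer by layer by cardinality, using three facts: each $\{b\}$ has $k\geqslant 2$ size-two neighbours, $\varnothing$ has $k+1$ size-one neighbours, and every vertex of size at least $3$ has at least two neighbours one size smaller. You verify the last fact explicitly through the $|T_X|=|T_Y|$ versus $|T_Y|=|T_X|+1$ case split, whereas the paper simply calls it clear.
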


\begin{proof}
For each integer \(\ell \geqslant 0\), let \(\mathcal{V}_\ell\) be  the set of vertices at distance \(\ell\) from the vertex  $\varnothing$.
By the description of $G$ given in Section \ref{pre}, the set
\(\mathcal{V}_\ell\) consists of all  vertices of cardinality  \(\ell\). Furthermore,
it is
clear   by the description  that every vertex in \(\mathcal{V}_1\) has exactly $k$  neighbors in \(\mathcal{V}_2\)  and  every vertex in \(\mathcal{V}_\ell\) has at least $2$  neighbors in \(\mathcal{V}_{\ell-1}\) whenever \(\ell \geqslant 3\).
Since  the initial activation of    $\mathcal{W}$    eventually activates all vertices in \(\mathcal{V}_2\),   it follows  that every vertex in
\(\mathcal{V}_\ell\) will eventually become active  for all \(\ell \geqslant 1\).
Finally,  because the vertex  \(\varnothing\) has exactly \(k+1\) active neighbors in \(\mathcal{V}_1\),   it must also become active. This demonstrates that  $\mathcal{W}$  is   a percolating set.
\end{proof}

\begin{theorem}\label{mBOk}
For every positive integer \(k\),
\[m(\mathbbmsl{B}_k,2)=
\left\lceil\mfrac{k^2+2k+3}{2}\right\rceil.\]
\end{theorem}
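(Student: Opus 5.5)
The lower bound comes straight from \eqref{m-me} and Theorem \ref{meBOk}. With $r=2$ we get $m(\mathbbmsl{B}_k,2)\geqslant m_e(\mathbbmsl{B}_k,2)/2=(k^2+2k+3)/2$, and since $m$ is an integer this gives $m(\mathbbmsl{B}_k,2)\geqslant\lceil (k^2+2k+3)/2\rceil$. The case $k=1$ is separate: $\mathbbmsl{B}_1$ is the $6$-cycle, where one checks directly that $m=3=\lceil 6/2\rceil$. From now on assume $k\geqslant2$, so that Lemma \ref{pairactive} is available.

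For the upper bound I will construct an explicit set $\mathcal{W}$ of size $\lceil (k^2+2k+3)/2\rceil$ whose activation activates every size-two vertex $\{a,b\}$ with $a\in X$ and $b\in Y$. Lemma \ref{pairactive} then shows that $\mathcal{W}$ percolates. The size-two vertices form a grid $X\times Y$ with $k(k+1)$ cells. The vertex $\{a,b\}$ has the neighbors $\{b\}\in\mathcal{V}_1$ and $\{a,b,b'\}\in\mathcal{V}_3$ for $b'\in Y\setminus\{b\}$. The vertex $\{b\}$ becomes active once two of its up-neighbors $\{a,b\}$ (a column of the grid) are active, or once $\varnothing$ is active together with one of them. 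The vertex $\{a,b,b'\}$ becomes active once two of its three down-neighbors $\{b,b'\},\{a,b\},\{a,b'\}$ are active; here $\{b,b'\}$ is not a vertex, since $|S_Y|=|S_X|+1$ fails, so in fact it needs both $\{a,b\}$ and $\{a,b'\}$. So $\{a,b,b'\}$ activates exactly when the two cells in row $a$ and columns $b,b'$ are active, and it then helps any $\{a,b''\}$ only if $b''\in\{b,b'\}$. Spread within a row therefore goes through $\mathcal{V}_3$-type vertices $\{a,b,b'\}$, and each of these propagates only to its own two cells. I would instead look for spread through $\mathcal{V}_1$ and $\mathcal{V}_0$: $\{a,b\}$ becomes active once $\{b\}$ and some active $\{a,b,b'\}$ are both active.

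The plan is a staircase construction. Put $\varnothing$ in $\mathcal{W}$ together with one vertex $\{b\}$, so that $\{b\}$ and $\varnothing$ are active. Then include a carefully chosen set of grid cells and vertices $\{a,b,b'\}$, about half of the $k^2+2k+2$ edges used in Lemma \ref{upperboundBOk}. The idea is to convert the edge construction into a vertex construction: the edges of $E_0$ in (i)--(iii) have endpoints in consecutive layers, and I would choose one endpoint of each pair of edges that share a vertex, pairing the edges up so that each chosen vertex accounts for two edges. This is the standard way the upper bound in \eqref{m-me} becomes nearly tight for $r=2$. Concretely, I would order the columns $b_1,\dots,b_{k+1}$. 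I would make the first column active using $\varnothing$ and $\{b_1\}$, adding a few cells. For each $j\geqslant2$, I would add about $(k+2)/2$ vertices, alternating between the column vertex $\{b_j\}$ (or one cell in column $j$) and vertices $\{a,b_{j-1},b_j\}$ for selected rows $a$. Each added $\{a,b_{j-1},b_j\}$, combined with the already active $\{a,b_{j-1}\}$, pushes activation into column $j$. Once column $j$ has one active cell, $\{b_j\}$ has two active neighbors ($\varnothing$ is active) and fills the column through the $\{a,b_{j-1},b_j\}$ triples.

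The main obstacle is the exact count, including the parity adjustment behind the ceiling. I expect to check small cases such as $k=2$ (where the target is $\lceil 11/2\rceil=6$) by hand, and then fix the per-column budget so that the total is $\lceil (k^2+2k+3)/2\rceil$, verifying the activation order column by column by induction on $j$.
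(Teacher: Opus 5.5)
Your lower bound from \eqref{m-me} and Theorem~\ref{meBOk} is exactly the paper's argument, and your direct check of $k=1$ is fine. \textbf{The gap is the upper bound}, which is everything else in the theorem. You never exhibit a set $\mathcal{W}$, never count it, and you explicitly leave ``the exact count, including the parity adjustment'' open. The bound is tight to the last unit, so phrases like ``about $(k+2)/2$ vertices per column'' and ``a few cells'' cannot certify it.

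Moreover, the one concrete commitment you make, putting both $\varnothing$ and $\{b_1\}$ into $\mathcal{W}$, provably fails for odd $k$. The proof of the left inequality in \eqref{m-me} turns a percolating vertex set $\mathcal{W}$ into a percolating edge set by choosing two edges at each vertex of $\mathcal{W}$. If two vertices of $\mathcal{W}$ are adjacent, the edge joining them can be chosen at both ends, giving at most $2|\mathcal{W}|-1$ edges. Hence $2|\mathcal{W}|-1\geqslant k^2+2k+3$. Since $k^2+2k+3$ is even for odd $k$, this forces $|\mathcal{W}|\geqslant (k^2+2k+5)/2$, one more than the target. So for odd $k$ every optimal set is independent, and your starting pair $\varnothing\sim\{b_1\}$ must be abandoned. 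The paper's odd-$k$ set is independent:
\begin{itemize}[noitemsep,topsep=0pt]
\item $\{k+1\}$ and $\{k+2\}$ activate $\varnothing$;
\item the cells $\{1,j\}$ with $j\geqslant k+3$, together with $\varnothing$, activate the other singletons;
\item the triples $\{i,k+1,k+2\}$, plus a pairing of $\{k+3,\ldots,2k+1\}$ by triples in each row $i\geqslant 2$, fill all cells.
\end{itemize}
For even $k$ the slack $2\lceil (k^2+2k+3)/2\rceil-(k^2+2k+3)=1$ permits at most one adjacent pair, and there your idea can be completed. Write $Y=\{b_1,\ldots,b_{k+1}\}$ and take $\varnothing$, $\{b_{k+1}\}$, the cells $\{a,b_a\}$ for $1\leqslant a\leqslant k$, and in each row $a$ the $k/2$ triples pairing up $Y\setminus\{b_a\}$. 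This set has $(k^2+2k+4)/2$ elements. Each $\{b_a\}$ is activated by $\varnothing$ and $\{a,b_a\}$, and each remaining cell by its singleton and its triple. The paper's even case is different; it uses size-four vertices.

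Several steps of your local analysis are also wrong and would derail the bookkeeping.
\begin{itemize}[noitemsep,topsep=0pt]
\item The vertex $\{a,b,b'\}$ does not need both $\{a,b\}$ and $\{a,b'\}$. It also has the $k-1$ neighbors $\{a,x,b,b'\}$ with $x\in X\setminus\{a\}$. The paper's even case activates $\{i',k+1,k+2\}$ exactly via $\{i',k+2\}$ and the size-four vertex $\{2i-1,2i,k+1,k+2\}$.
\item The cell $\{a,b_{j-1}\}$ cannot ``push'' activation into $\{a,b_j\}$, since the two are not adjacent. The cell $\{a,b_j\}$ is activated by $\{b_j\}$ together with some active triple containing it. So column $j$ fills only in rows where such a triple is present, which is precisely the row-by-row pairing you have not specified.
\item Lemma~\ref{upperboundBOk} uses $k^2+2k+3$ edges, not $k^2+2k+2$.
\item The right-hand inequality of \eqref{m-me} only gives $m(\mathbbmsl{B}_k,2)\leqslant k^2+2k+3$. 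No general procedure halves an edge-percolating set into a vertex-percolating one, so the ``pair up the edges of $E_0$'' heuristic is not an argument.
\end{itemize}
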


\begin{proof}
By  \eqref{m-me} and Theorem  \ref{meBOk}, it follows  that
$m(\mathbbmsl{B}_k,2)
\geqslant
(k^2+2k+3)/2$.
Also,    by   Lemma \ref{pairactive}, it suffices to construct a subset   $\mathcal{W}\subseteq V(\mathbbmsl{B}_k)$ of size
$\lceil(k^2+2k+3)/2\rceil$ such that the initial activation of the vertices in    $\mathcal{W}$      eventually activates all size-two vertices. We proceed by considering two cases based on the parity of \(k\).

\noindent\textbf{Case 1.} \(k\) is odd.

Define
\begin{align*}
\mathcal{P}_1
&=
\Big\{\{k+1\},\{k+2\}\Big\},\\
\mathcal{P}_2
&=
\Bigl\{\{1,j\} \, \Big| \,  k+3\leqslant j\leqslant 2k+1\Bigr\},\\
\mathcal{P}_3
&=
\Bigl\{\{i,k+1,k+2\} \, \Big| \, 1\leqslant i\leqslant k\Bigr\}, \\
\mathcal{P}_4
&=
\Big\{
\{i,2j,2j+1\} \, \Big| \,
2\leqslant i\leqslant k    \text{ and }     \mfrac{k+3}{2}\leqslant j\leqslant k
\Big\}
\end{align*}
and let
$\mathcal{W}
=
\mathcal{P}_1\cup
\cdots\cup
\mathcal{P}_4$. An easy  computation reveals that
$|\mathcal{W}|
=
(k^2+2k+3)/2$.
So, it remains to show    that the initial activation of the vertices in    $\mathcal{W}$    eventually activates all  size-two  vertices.  As     \(\varnothing\) is adjacent to both of the  activated vertices
\(\{k+1\}, \{k+2\}\in \mathcal{P}_1\), the vertex
\(\varnothing\)   becomes active. For any  integer     $j\in\{k+3, \ldots, 2k+1\}$, the vertex    $\{j\}$
becomes active because it has two activated neighbors
$\varnothing$ and $\{1, j\}\in \mathcal{P}_2$.
Finally, for all integers   $i\in\{1, \dots,  k\}$ and $j\in\{k+1, k+2\}$, the vertex
\(\{i,j\}\) becomes active
due to its adjacency to   both of the activated  vertices $\{j\}\in \mathcal{P}_1$ and
$\{i,k+1,k+2\}\in \mathcal{P}_3$.

We next show that every remaining  size-two vertex  becomes active.
Let \(\{i, j\}\) be a vertex with
$2\leqslant i\leqslant k$ and
$k+3\leqslant j\leqslant 2k+1$.
Since  \(\{i, j\}\)  is adjacent to both of the  activated vertices $\{j\}$ and $\{i, j, \smash{\widecheck{j}}\}\in\mathcal{P}_4$, where
\begin{equation}\label{jprime}
\widecheck{j}=
\begin{cases}
j-1 & \text{ if } j \text{ is odd},\\[2mm]
j+1 & \text{ otherwise},
\end{cases}\end{equation}
we conclude that the vertex
\(\{i,j\}\) becomes active.

\noindent\textbf{Case 2.} \(k\) is even.

Define
\begin{align*}
\mathcal{Q}_1
&=
\Bigl\{\{k+1\},\{k+2\},\{k+3\}\Bigr\},\\
\mathcal{Q}_2
&=
\Bigl\{\{1,j\} \, \Big| \,  k+4\leqslant j\leqslant 2k+1\Bigr\},\\
\mathcal{Q}_3
&=
\Bigl\{\{i,k+2,k+3\} \, \Big| \,  1\leqslant i\leqslant k\Bigr\},\\
\mathcal{Q}_4
&=
\Bigl\{
\{i,2j,2j+1\} \, \Big| \,
2\leqslant i\leqslant k  \text{ and }
\mfrac{k+4}{2}\leqslant j\leqslant k \Bigr\},\\
\mathcal{Q}_5
&=
\Bigl\{
\{2i-1,2i,k+1,k+2\} \, \Big|  \,
1\leqslant i\leqslant \mfrac{k}{2}
\Bigr\}
\end{align*}
and set
$\mathcal{W}
=
\mathcal{Q}_1\cup
\cdots
\cup
\mathcal{Q}_5$.
A straightforward calculation  shows that
$|\mathcal{W}|
=
(k^2+2k+4)/2=\lceil(k^2+2k+3)/2\rceil$.
Thus,   it is enough  to demonstrate  that the initial activation of the vertices in    $\mathcal{W}$    eventually activates all  size-two  vertices.
Since   \(\varnothing\) is adjacent to both of the activated  vertices
\(\{k+1\}, \{k+2\}\in \mathcal{Q}_1\), the vertex
\(\varnothing\)     becomes  active.
For any  integer     $j\in\{k+4, \ldots, 2k+1\}$,  the  vertex \(\{j\}\) becomes active because   it  possesses  the two activated neighbors $\varnothing$ and $\{1, j\}\in \mathcal{Q}_2$.
For all integers   $i\in\{1, \dots,  k\}$ and $j\in\{k+2, k+3\}$, the vertex    $\{i,j\}$    becomes active
due to its adjacency to   both of the activated  vertices
$\{j\}\in \mathcal{Q}_1$ and
$\{i,k+2,k+3\}\in \mathcal{Q}_3$.
For  all integers  \( i\in\{1, \ldots,  k/2\}\) and $i'\in\{2i-1, 2i\}$,
the vertex
$\{i',k+1,k+2\}$    becomes active because it has two activated neighbors
$\{i',k+2\}$ and $\{2i-1, 2i,k+1,k+2\} \in \mathcal{Q}_5$.
Hence,    the vertex \(\{i,k+1,k+2\}\) has been activated for    \( i=1, \ldots,  k\).
Finally, for  any  integer   \( i\in\{1, \ldots,  k\}\),
since  $\{i,k+1\}$   is adjacent to both of the  activated  vertices
$\{k+1\}\in\mathcal{Q}_1$ and $\{i,k+1,k+2\}$, the vertex
$\{i,k+1\}$    becomes active.

We next show    that all remaining  size-two vertices      become active.
Let \(\{i, j\}\) be a vertex with
\(2\leqslant i\leqslant k\) and
\(k+4\leqslant j\leqslant 2k+1.\)
As  \(\{i, j\}\)  is adjacent to both of the activated  vertices $\{j\}$ and $\{i, j, \smash{\widecheck{j}}\}\in\mathcal{Q}_4$,  where $\smash{\widecheck{j}}$ is introduced in \eqref{jprime},
we conclude that the vertex
\(\{i,j\}\) becomes active.
\end{proof}

\section{Odd graphs}\label{OG}

In this section,  we establish  the  upper bound  asserted in Theorem  \ref{OO}. The corresponding lower bound follows immediately by combining Proposition
\ref{cover} and  Theorem \ref{mBOk}.
We begin with an auxiliary lemma demonstrating how a percolating set consisting   of  size-two vertices  of $\mathbbmsl{O}_k$ can be transformed into a substantially smaller percolating set.

\begin{lemma}\label{halfpercolating}
Let $k\geqslant 2$. Every percolating set of size $m$  in  the $2$-neighbor bootstrap percolation process on $\mathbbmsl{O}_k$   consisting exclusively of      size-two vertices implies the existence of a percolating set of size at most $(m+3k+2)/2$.
\end{lemma}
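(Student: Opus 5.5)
The idea is to trade each pair of size-two vertices sharing the same $X$-element for a single size-three vertex, and to pay a fixed additive cost for activating all singletons. In the model $\mathcal{F}'$ of $\mathbbmsl{O}_k$, a size-two vertex has the form $\{a,b\}$ with $a\in X$ and $b\in Y$. For $k\geqslant 3$, its neighbors include $\{b\}$ and every size-three vertex $\{a,b,b'\}$ with $b'\in Y\setminus\{b\}$; these lie in $\mathcal{F}'$ because $3\leqslant k$. A singleton $\{b\}$ with $b\in Y$ is adjacent to $\varnothing$.

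Let $\mathcal{P}$ be a percolating set of $m$ size-two vertices, and assume first that $k\geqslant3$. For each $a\in X$, let $\mathcal{P}_a=\{\{a,b\}\in\mathcal{P}\}$. Partition $\mathcal{P}_a$ into $\lfloor|\mathcal{P}_a|/2\rfloor$ pairs $\{a,b\},\{a,b'\}$, plus at most one leftover element. Define $\mathcal{W}$ to contain:
\begin{itemize}[noitemsep,topsep=0pt]
\item[(i)] all $k+1$ singletons $\{b\}$ with $b\in Y$;
\item[(ii)] for each such pair, the size-three vertex $\{a,b,b'\}$;
\item[(iii)] each leftover vertex $\{a,b\}$ itself.
\end{itemize}
If $r\leqslant k$ is the number of leftovers, then
\[
|\mathcal{W}|\leqslant (k+1)+\mfrac{m-r}{2}+r=\mfrac{m+r+2k+2}{2}\leqslant\mfrac{m+3k+2}{2}.
\]

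Next, check that $\mathcal{W}$ activates every vertex of $\mathcal{P}$. Each paired vertex $\{a,b\}$ has two initially active neighbors, namely $\{b\}$ and $\{a,b,b'\}$. Hence it becomes active after one round, and so does its partner $\{a,b'\}$. Leftover vertices are active from the start.

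Therefore, after one round the active set contains $\mathcal{P}$. By monotonicity of the $2$-neighbor process (a superset of a percolating set percolates), $\mathcal{W}$ is a percolating set. The vertex $\varnothing$ also activates along the way, though this is not even needed.

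The only delicate point is that size-three vertices must exist in $\mathcal{F}'$, which requires $k\geqslant3$. I expect the case $k=2$ to need separate treatment. In that case $\mathbbmsl{O}_2$ is the Petersen graph, and the required bound is $(m+8)/2\geqslant4$. So it suffices to exhibit an explicit percolating set of the Petersen graph of size at most $4$, for instance three suitably chosen vertices checked directly from Figure~\ref{O2}. This settles the lemma for $k=2$ regardless of $m$.
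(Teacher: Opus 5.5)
Your proof is correct and is essentially the paper's argument. The paper also partitions each $Y_a=\{b\in Y \mid \{a,b\}\in\mathcal{P}\}$ into pairs plus at most one singleton, initially activates all $\{b\}$ with $b\in Y$ together with the sets $\{a\}\cup P_i$, and gets the same count $(k+1)+\sum_{a}\lceil |Y_a|/2\rceil\leqslant (m+3k+2)/2$.

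Your separate treatment of $k=2$ goes beyond the paper. Its construction uses the size-three sets $\{a,b,b'\}$, which are not vertices of $\mathbbmsl{O}_2$ in the model $\mathcal{F}'$, so it does not literally apply there. To finish that case you should name the set explicitly. For example, $\{\{3\},\{4\},\{1,5\}\}$ percolates the Petersen graph: it activates $\varnothing,\{2,3\},\{2,4\}$, then $\{5\},\{1,3\},\{1,4\}$, then $\{2,5\}$.
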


\begin{proof}
Let $\mathcal{W}$ be a percolating set  of size $m$  in  the $2$-neighbor bootstrap percolation process on $\mathbbmsl{O}_k$     consisting   of    size-two vertices. For each element  $a\in X$, consider    a    partition $\mathnormal{\Pi}_a=\{P_1, \ldots,  P_{\ell_a}\}$ of  the set
$Y_a=\{b\in Y \, | \,  \{a, b\}\in \mathcal{W}\}$ satisfying  $|P_1|=\cdots=|P_{\ell_a-1}|=2$ and $|P_{\ell_a}|\in\{1, 2\}$.
Now, set
\[\mathcal{W}'
=
\Big\{\{b\} \, \Big| \,  b\in Y\Big\}\cup\Big\{\{a\}\cup P_i \, \Big| \, a\in X \text{ and }   1\leqslant i\leqslant \ell_a\Big\}.\]
Since $\sum_{a\in X}|Y_a|= m$, we find that
\[|\mathcal{W}'|=|Y|+\sum_{a\in X}\ell_a\leqslant (k+1)+\sum_{a\in X}\mfrac{|Y_a|+1}{2}=
\mfrac{m+3k+2}{2}.\]
So,    it remains to show that $\mathcal{W}'$ percolates all vertices in $\mathcal{W}$. Initially activating   all vertices  in
$\mathcal{W}'$, consider an   arbitrary vertex  $\{a, b\}\in\mathcal{W}\setminus \mathcal{W}'$. As   $\{a, b\}\notin  \mathcal{W}'$, there  exists  an element $b'\in Y\setminus\{b\}$ such that
\(\{b, b'\}\in\mathnormal{\Pi}_a\) which implies that
$\{a, b, b'\}\in \mathcal{W}'$.
Hence, the vertex   $\{a, b\}$ becomes active because it has two activated neighbors
$\{b\}, \{a, b, b'\}\in \mathcal{W}'$.
\end{proof}

We are now in a position to state and prove our final result.

\begin{theorem}\label{OGOG}
For every    integer $k \geqslant 8$,
\[ m(\mathbbmsl{O}_k,2)\leqslant\mfrac{k^2+5k+3}{3}.\]
\end{theorem}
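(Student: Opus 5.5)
By Lemma \ref{halfpercolating}, it suffices to exhibit a percolating set $\mathcal{W}$ of $\mathbbmsl{O}_k$ consisting only of size-two vertices with
\[
|\mathcal{W}|\leqslant m:=\mfrac{2k^2+k}{3},
\]
up to rounding. Indeed, $(m+3k+2)/2\leqslant (k^2+5k+3)/3$ is equivalent to exactly this bound. Since $\mathbbmsl{O}_k$ has $k(k+1)$ size-two vertices, we are looking for roughly two thirds of them. By Lemma \ref{pairactive}, it is moreover enough that the activation of $\mathcal{W}$ eventually activates \emph{all} size-two vertices. So the task is to show that a well-chosen two-thirds of the pairs $\{a,b\}$ with $a\in X$ and $b\in Y$ generate the remaining third.

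\textbf{Upward propagation.} In the description via $\mathcal{F}'$, a vertex $S$ of size $\ell\geqslant 3$ has at least two neighbors of size $\ell-1$, namely $S$ minus an element of $S_X$ or of $S_Y$, whenever both remain in $\mathcal{F}$. I split $X=X_1\cup X_2\cup X_3$ and $Y=Y_1\cup Y_2\cup Y_3$ into three parts of sizes about $k/3$ and $(k+1)/3$, and place into $\mathcal{W}$ all pairs $\{a,b\}$ with $(a,b)\in X_i\times Y_j$ for $(i,j)$ in a set of six of the nine blocks, chosen as the complement of a ``transversal'', plus $O(k)$ correcting pairs. Within each block union $X_i\cup Y_j$ that is fully seeded, induction on size shows that every member of $\mathcal{F}'$ contained in the union becomes active. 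The largest such sets have size close to $2k/3$, which is still below $k$.

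\textbf{Reaching the top level.} To exploit the odd-graph edges $|S\triangle T|=2k$, I would combine the activated families. A size-$k$ vertex $S$ whose parts lie in two seeded unions gets two active lower neighbors, obtained by deleting an element, and hence becomes active. Its disjoint partner $T$ of size $k$, with $S\cap T=\varnothing$, then has one active neighbor $S$ from the top level. I would then check that $T$ has a second active neighbor of size $k-1$ inside seeded territory. Activity then flows downward: a size-$(\ell-1)$ vertex becomes active once two of its size-$\ell$ supersets are active. This descent continues until the missing pairs $\{a,b\}$ with $(a,b)$ in the three unseeded blocks are activated, each via two active size-three supersets $\{a,b,b'\}$, or via $\{b\}$ together with one such superset once $\varnothing$ and the singletons are active. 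The hypothesis $k\geqslant 8$ should enter here, guaranteeing that each part has size at least $2$ or $3$ so that these ``two neighbors'' always exist.

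\textbf{Main obstacle.} The hard step is the top-level crossing. I need the disjointness edges to bridge unseeded blocks, and I need the final count, six blocks of about $k(k+1)/9$ pairs each plus the $O(k)$ corrections, to stay at most $(2k^2+k)/3$ exactly rather than merely asymptotically. I would first try the concrete choice in which the unseeded blocks are $X_i\times Y_i$ for $i=1,2,3$. I would verify by hand the activation of a single size-$k$ vertex of the shape $(X_1\cup X_2)\cup (Y_1\cup Y_2)$, trimmed to size $k$, and then track which disjoint partners and which descendants it activates. If the correction pairs overshoot the budget, I would adjust the part sizes according to $k \bmod 3$, which is where the rounding in the bound is absorbed.
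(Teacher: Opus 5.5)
Your reduction is the right one and matches the paper. By Lemmas \ref{halfpercolating} and \ref{pairactive} it suffices to find at most $(2k^2+k)/3$ size-two vertices whose activation activates every size-two vertex. The construction you propose cannot do this, however, and it fails exactly at the step you flag.

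Let $H$ be the set of seeded pairs, and let $\mathcal{A}$ be the family of $S\in\mathcal{F}'$ with $S_X\times S_Y\subseteq H$. This family contains $\varnothing$, all singletons, the seeds, and everything your upward propagation produces. If $\mathcal{A}$ has no member of size $k$, then every $T\notin\mathcal{A}$ has at most one neighbour in $\mathcal{A}$, for three reasons:
\begin{itemize}[noitemsep,topsep=0pt]
\item each $T\cup\{x\}$ still contains an unseeded pair of $T$;
\item if two lower neighbours $T\setminus\{x\}$ and $T\setminus\{x'\}$ were in $\mathcal{A}$, every unseeded pair of $T$ would contain both $x$ and $x'$, but these lie on the same side ($X$ if $|T|$ is even, $Y$ if odd), which is impossible;
\item the disjointness neighbours have size $k$.
\end{itemize}
So the process never leaves $\mathcal{A}$. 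In the complement of a transversal, a fully seeded $A\times B$ cannot meet both $X_i$ and $Y_i$. Hence $A$ lies in a single $X_i$ or $B$ lies in a single $Y_i$. With parts of size about $k/3<\lfloor k/2\rfloor$, no size-$k$ vertex (which needs $|S_X|=\lfloor k/2\rfloor$ and $|S_Y|=\lceil k/2\rceil$) is in $\mathcal{A}$.

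Consequently your claim that a size-$k$ vertex ``whose parts lie in two seeded unions'' has two active lower neighbours is false: those neighbours have size $k-1$ and are not in $\mathcal{A}$. The diagonal pairs are never activated. Repairing this requires a fully seeded $A\times B$ with $|A|,|B|\approx k/2$. Such a block contains on the order of $k^2/36$ diagonal pairs, so $O(k)$ corrections cannot supply it.

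Independently, the count fails. Six balanced blocks give $k(k+1)-\sum_i|X_i||Y_i|\approx 2k(k+1)/3$ seeds, already about $k/3$ over budget. Adjusting part sizes by $O(1)$ cannot help, because $\sum_i|X_i||Y_i|=k(k+1)/3+\sum_i\delta_i\varepsilon_i$, where $\delta_i,\varepsilon_i$ are the deviations from the mean sizes.

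The missing idea is that the seeded region must contain a complete bipartite block with both sides of size roughly $k/2$ or more. Only then does activity reach level $k$, where the disjointness edges can carry it across. The paper achieves this with a strongly unbalanced layout: $|X_1|=s$, $|X_2|=s+1$, $|X_3|=k-2s-1$, $|Y_1|=s+1$, $|Y_2|=k-s$ with $s=\lfloor (k-1)/3\rfloor$, seeding $(X_1\times Y)\cup(X_2\times Y_1)\cup(X_3\times Y_2)$. This is in fact a transversal complement in your sense, with $Y$-parts $\varnothing$, $Y_2$, $Y_1$ paired with $X_1,X_2,X_3$, but with very unequal sizes.

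In this layout $(X_1\cup X_3)\times Y_2$ is fully seeded with both sides about $2k/3$. So for even $k$ the size-$k$ vertices $\langle s-t,0,2t+1,0,s+t+1\rangle$ become active, and their disjoint partners $\langle t,s+1,0,s+1,t\rangle$ follow. Explicit chains with verified neighbour counts then descend to $X_2\times Y_2$ and afterwards to $X_3\times Y_1$; this is where $t\geqslant 1$ and $s-t\geqslant 1$, i.e.\ $k\geqslant 8$, are used. The unseeded region has about $(k+1)^2/3$ pairs, which is what the budget $(2k^2+k)/3$ forces. None of this chain appears in your proposal, so even with a corrected layout the core verification would still be missing.
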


\begin{proof}
For simplicity, let
\(s=\lfloor(k-1)/3\rfloor\) and \(t=\lfloor  k/2\rfloor-1-s\).
We have
\begin{equation}\label{kst}
k=
\begin{cases}
2s+2t+2 & \text{ if $k$ is even},\\[2mm]
2s+2t+3 & \text{ otherwise}.
\end{cases}
\end{equation}
The assumption  $k \geqslant 8$ guarantees that both integers  $t$ and $s-t$ are positive.
Both these conditions are necessary in the following argument.
Partition $X$  into three subsets
\begin{align*}
X_1&=\{1,\ldots,s\},\\
X_2&=\{s+1,\ldots,2s+1\},\\
X_3&=\{2s+2,\ldots,k\}\\
\intertext{and partition   $Y$ into  two subsets}
Y_1&=\{k+1,\ldots,k+s+1\},\\
Y_2&=\{k+s+2,\ldots,2k+1\}.
\end{align*}
We initially activate all vertices in
\((X_1\times Y)\cup (X_2\times Y_1)\cup (X_3\times Y_2)\). We aim to prove that every vertex in
\((X_2\times Y_2)\cup (X_3\times Y_1)\)
is subsequently   activated  in  the $2$-neighbor bootstrap percolation process on $\mathbbmsl{O}_k$. This ensures that  all  size-two vertices   become active  and       Lemma \ref{pairactive} then guarantees that  all vertices of $\mathbbmsl{O}_k$  become active.
Since  \[s\in\left\{\mfrac{k-1}{3}, \mfrac{k-2}{3}, \mfrac{k-3}{3}\right\},\] we     observe that    the number of initially activated vertices is
\[\begin{aligned}
\bigl|(X_1\times Y)\cup (X_2\times Y_1)\cup (X_3\times Y_2)\bigr|
&=s(k+1)+(s+1)^2+(k-2s-1)(k-s)\\[2mm]
&=3\left(s-\mfrac{k-2}{3}\right)^2+\mfrac{2k^2+k-1}{3}\\[2mm]
&\leqslant\mfrac{2k^2+k}{3}.
\end{aligned}\]
Thus,  in view of   Lemma \ref{pairactive} and   Lemma \ref{halfpercolating}, the desired upper bound follows once we prove  that the initial activation of   all vertices in
\((X_1\times Y)\cup (X_2\times Y_1)\cup (X_3\times Y_2)\) leads to the   activation of  all vertices in
\((X_2\times Y_2)\cup (X_3\times Y_1)\).

Given nonnegative integers \(x_1,x_2,x_3,y_1,y_2\), we denote by
\(\langle x_1,x_2,x_3,  y_1,y_2\rangle\)
the set  of all vertices \(A\) satisfying
\(|A\cap X_i|=x_i\)
and
\(|A\cap Y_j|=y_j\) for all indices $i\in\{1,2,3\}$ and
$j\in\{1,2\}$.
Further,  we write
\[\langle x_1,x_2,x_3,  y_1,y_2 \rangle
\xrightarrow{ \, \,   \ell  \, \,  }
\langle x'_1,x'_2,x'_3,  y'_1,y'_2 \rangle \]
to indicate that every vertex in
\(\langle x'_1,x'_2,x'_3,  y'_1,y'_2 \rangle \) has exactly \(\ell\) neighbors in
\(\langle x_1,x_2,x_3,  y_1,y_2 \rangle \).
Consequently, if all vertices in \(\langle x_1,x_2,x_3,  y_1,y_2 \rangle \) are activated and
\(\ell\geqslant2\), then every vertex in
\(\langle x'_1,x'_2,x'_3,  y'_1,y'_2 \rangle \) becomes active.

As all vertices in
\((X_1\times Y)\cup (X_2\times Y_1)\cup (X_3\times Y_2)\) are  initially activated,
we find  that every vertex in
\(\langle s-t,0,2t+1,  0,s+t+1 \rangle\)
becomes  active  when \(k\) is even,  whereas  every vertex in
\(\langle s-t,0,2t+1,  0,s+t+2 \rangle\)
becomes  active  when \(k\) is odd. In both cases, \eqref{kst} shows that these vertices have cardinality  \(k\).
When \(k\) is even, we have
\[\langle s-t,0,2t+1,  0,s+t+1 \rangle
\xrightarrow{ \, \,   s+t+2 \, \,  }
\langle t,s+1,0,  s+1,t \rangle,\]
whereas for odd \(k\),
\[\langle s-t,0,2t+1,  0,s+t+2 \rangle
\xrightarrow{ \, \,   2t+2 \, \,  }
\langle t,s+1,0,  s+1,t+1 \rangle
\xrightarrow{ \, \,   s+t+3 \, \,  }
\langle t,s+1,0,  s+1,t \rangle.\]
Therefore,  in both cases, every vertex in  \(\langle t,s+1,0,  s+1,t \rangle \) becomes active.

We now  obtain the activation of all  vertices  in \(\langle 1,s+1,0,  s+1,1 \rangle \) through the chain
\begin{align*}
&\langle t,s+1,0,  s+1,t \rangle
\xrightarrow{ \, \,   s-t+1 \, \,  }
\langle t-1,s+1,0,  s+1,t \rangle
\xrightarrow{ \, \,   k-s-t+1 \, \,  }
\langle t-1,s+1,0,  s+1,t-1 \rangle \\[2mm]
&\xrightarrow{ \, \,   s-t+2 \, \,  }
\langle t-2,s+1,0,  s+1,t-1 \rangle
\xrightarrow{ \, \,   k-s-t+2 \, \,  }
\langle t-2,s+1,0,  s+1,t-2 \rangle
\xrightarrow{ \, \,   s-t+3 \, \,  }
\cdots\cdots\\[2mm]
&\xrightarrow{ \, \,   k-s-1 \, \,  }
\langle 1,s+1,0,  s+1,1 \rangle.
\end{align*}
Hence,  all vertices in \(\langle 1,s+1,0,  s+1,1 \rangle \) become active.
Furthermore,
\begin{align*}
\langle 1,s+1,0,  s+1,1 \rangle \xrightarrow{ \,\, 1 \,\, } \langle 1,s,0,  s+1,1 \rangle & \\
\intertext{and}
\langle 1,s,0,  s+1,0 \rangle \xrightarrow{ \,\, 1 \,\, } \langle 1,s,0,  s+1,1 \rangle. &
\end{align*}
Thus, every vertex in  \(\langle 1,s,0,  s+1,1 \rangle \) becomes active, as
all vertices in both \(\langle 1,s+1,0,  s+1,1 \rangle \) and    \(\langle 1,s,0,  s+1,0 \rangle \) are already active.
We can now iteratively apply the following procedure:
\begin{align*}
&\langle 1,s,0,  s+1,1 \rangle
\xrightarrow{ \, \,   2 \, \,  }
\langle 2,s,0,  s+1,1 \rangle
\xrightarrow{ \, \,   2 \, \,  }
\langle 2,s-1,0,  s+1,1 \rangle
\xrightarrow{ \, \,   3 \, \,  } \\[2mm]
&\langle 3,s-1,0,  s+1,1 \rangle
\xrightarrow{ \, \,   3 \, \,  }
\langle 3,s-2,0,  s+1,1 \rangle
\xrightarrow{ \, \,   4 \, \,  }
\cdots\cdots
\xrightarrow{ \, \,   s \, \,  }
\langle s,1,0,  s+1,1 \rangle.
\end{align*}
Consequently, all vertices in \(\langle s,1,0,  s+1,1 \rangle \) become  active. We have
\begin{align*}
\langle s,1,0,  s+1,1 \rangle
\xrightarrow{ \, \,   1 \, \,  }
\langle s,1,0,  s,1 \rangle & \\
\intertext{and}
\langle s,0,0,s,1 \rangle
\xrightarrow{ \, \,   1 \, \,  }
\langle s,1,0,s,1 \rangle &
\end{align*}
which yield  that   every vertex in \(\langle s,1,0,s,1 \rangle \) becomes  active, as all vertices in both  $\langle s,1,0,  s+1,1 \rangle$ and $\langle s,0,0,s,1 \rangle$ are already activated.
Similarly,
\begin{align*}
\langle s,1,0,s,1 \rangle
\xrightarrow{ \, \,   1 \, \,  }
\langle s-1,1,0,s,1 \rangle & \\
\intertext{and}
\langle s-1,1,0,s,0 \rangle
\xrightarrow{ \, \,   1 \, \,  }
\langle s-1,1,0,s,1 \rangle &
\end{align*}
which imply  that all vertices in \(\langle s-1,1,0,s,1 \rangle \) become  active, since   all vertices in both  $\langle s,1,0,s,1 \rangle$ and $\langle s-1,1,0,s,0 \rangle$ are already activated.
Finally,
\begin{align*}
&\langle s-1,1,0,s,1 \rangle
\xrightarrow{ \, \,   2 \, \,  }
\langle s-1,1,0,s-1,1 \rangle
\xrightarrow{ \, \,   2 \, \,  }
\langle s-2,1,0,s-1,1 \rangle
\xrightarrow{ \, \,   3 \, \,  }
\langle s-2,1,0,s-2,1 \rangle \\[2mm]
&\xrightarrow{ \, \,   3 \, \,  }
\langle s-3,1,0,s-2,1 \rangle
\xrightarrow{ \, \,   4 \, \,  }
\cdots\cdots
\xrightarrow{ \, \,   s \, \,  }
\langle 0,1,0,1,1 \rangle
\xrightarrow{ \, \,   s+1 \, \,  }
\langle 0,1,0,0,1 \rangle.
\end{align*}
As  all vertices in $ \langle s-1,1,0,s,1 \rangle$ are already activated, we find that    every vertex in \(\langle 0,1,0,0,1 \rangle \),   or  equivalently every vertex in
\(X_2\times Y_2\),   becomes active.

So,  the only remaining task is to activate all vertices in \(X_3\times Y_1\)  or equivalently all vertices  in \(\langle 0,0,1,1,0 \rangle \).
Since all vertices in
\(((X_1\cup X_2)\times Y)\cup   (X_3\times Y_2)\) are  already  activated,
we conclude that every vertex in
\(\langle s-t,1,2t,0,s+t+1 \rangle\)
becomes  active  when \(k\) is even,  whereas  every vertex in
\(\langle s-t,1,2t,0,s+t+2 \rangle\)
becomes  active  when \(k\) is odd. In both cases, \eqref{kst} shows that these vertices have cardinality  \(k\).
If \(k\) is even, then
\[\langle s-t,1,2t,0,s+t+1 \rangle
\xrightarrow{ \, \,   s+t+2 \, \,  }
\langle t,s,1,s+1,t \rangle,\]
whereas   for odd \(k\),
$$\langle s-t,1,2t,0,s+t+2 \rangle
\xrightarrow{ \, \,   2t+1 \, \,  }
\langle t,s,1,s+1,t+1 \rangle
\xrightarrow{ \, \,   s+t+3 \, \,  }
\langle t,s,1,s+1,t \rangle.$$
Therefore, in both cases,  every vertex in \(\langle t,s,1,s+1,t \rangle \) becomes active.

Proceeding as before, we get that
\begin{align*}
&\langle t,s,1,s+1,t \rangle
\xrightarrow{ \, \,   s-t+1 \, \,  }
\langle t-1,s,1,s+1,t \rangle
\xrightarrow{ \, \,   k-s-t+1 \, \,  }
\langle t-1,s,1,s+1,t-1 \rangle
\xrightarrow{ \, \,   s-t+2 \, \,  } \\[2mm]
&\langle t-2,s,1,s+1,t-1 \rangle
\xrightarrow{ \, \,   k-s-t+2 \, \,  }
\langle t-2,s,1,s+1,t-2 \rangle
\xrightarrow{ \, \,   s-t+3 \, \,  }
\cdots\cdots
\xrightarrow{ \, \,   k-s-1 \, \,  } \\[2mm]
&\langle 1,s,1,s+1,1 \rangle
\xrightarrow{ \, \,   s  \, \,  } \langle 0,s,1,s+1,1 \rangle.
\end{align*}
Hence, all vertices in \(\langle 0,s,1,s+1,1 \rangle \) become active.
It follows from
\begin{align*}
\langle 0,s,1,s+1,1 \rangle
\xrightarrow{ \, \,   1 \, \,  }
\langle 0,s,1,s,1 \rangle & \\
\intertext{and}
\langle 0,s,0,s,1 \rangle
\xrightarrow{ \, \,   1 \, \,  }
\langle 0,s,1,s,1 \rangle &
\end{align*}
that all vertices in \(\langle 0,s,1,s,1 \rangle \) become  active, as   all vertices in both  $\langle 0,s,1,s+1,1 \rangle$ and $\langle 0,s,0,s,1 \rangle$ are already activated.
Finally,
\begin{align*}
&\langle 0,s,1,s,1 \rangle
\xrightarrow{ \, \,   2 \, \,  }
\langle 0,s-1,1,s,1 \rangle
\xrightarrow{ \, \,   2 \, \,  }
\langle 0,s-1,1,s-1,1 \rangle
\xrightarrow{ \, \,   3 \, \,  } \\[2mm]
&\cdots\cdots
\xrightarrow{ \, \,   s \, \,  }
\langle 0,1,1,1,1 \rangle
\xrightarrow{ \, \,   s+1 \, \,  }
\langle 0,0,1,1,1 \rangle
\xrightarrow{ \, \,   k-s \, \,  }
\langle 0,0,1,1,0 \rangle.
\end{align*}
Since  all vertices in $ \langle 0,s,1,s,1 \rangle $ are already activated, we derive  that   every vertex in \(\langle 0,0,1,1,0 \rangle \), or equivalently every vertex in
\(X_3\times Y_1\), becomes active.
\end{proof}

A computer simulation confirms  that  the configuration of initially activated  vertices used  in the proof of Theorem \ref{OGOG} remains valid for \(k\in\{3, \ldots,     7\}\),  indicating that only minor adjustments are required to extend the argument to this range.  For \(k\in\{1, 2\}\), the  upper bound stated   in Theorem \ref{OGOG}   holds directly.  Combining these observations  with Proposition \ref{cover}, Theorem \ref{mBOk}, and Theorem \ref{OGOG} yields a complete proof of  Theorem \ref{OO}.

\end{document}